%%%%%%%%%%%%%%%%%%%%%%%%%%%%%%%%%%%%%%%%%%%%%%%%%%%%%%%%%%%%%%%%%%%%%%%%%%%%%%%%
%2345678901234567890123456789012345678901234567890123456789012345678901234567890
%        1         2         3         4         5         6         7         8
\documentclass[journal]{ieeeconf}
%\documentclass[letterpaper, 10 pt, conference]{ieeeconf}  % Comment this line out
                                                          % if you need a4paper
%\documentclass[a4paper, 10pt, conference]{ieeeconf}      % Use this line for a4
%
%\usepackage{amssymb}
%\usepackage{amsmath}
%\usepackage{amsfonts}
%\usepackage{cite}
%\usepackage{graphicx}
%\usepackage{ulem}
\usepackage{color}
\usepackage{lineno}
\usepackage[cmex10]{amsmath}
\usepackage{amscd,amsfonts,amsmath,amssymb,mathrsfs,pifont,stmaryrd,tipa}
\usepackage{array,cases,dsfont,graphicx,texdraw}
\usepackage{wrapfig}
\usepackage{graphics} % for pdf, bitmapped graphics files
\usepackage{epsfig} % for postscript graphics files
\usepackage{mathbbold}
\usepackage{stfloats}
\usepackage{subfig}
\usepackage{hyperref}
\setcounter{MaxMatrixCols}{10}
%TCIDATA{OutputFilter=Latex.dll}
%TCIDATA{Version=5.50.0.2953}
%TCIDATA{<META NAME="SaveForMode" CONTENT="1">}
%TCIDATA{BibliographyScheme=Manual}
%TCIDATA{LastRevised=Monday, September 09, 2013 17:13:39}
%TCIDATA{<META NAME="GraphicsSave" CONTENT="32">}
%TCIDATA{Language=American English}

%\IEEEoverridecommandlockouts
%\overrideIEEEmargins

\newtheorem{Remark}{Remark}
\newtheorem{Corollary}{Corollary}

\newtheorem{Problem}{Problem}
\newenvironment{Proof}{\noindent{\em Proof:\/}}{\hfill $\Box$\par}
\newtheorem{Theorem}{Theorem}

\newtheorem{Assumption}{Assumption}

% Macros for Scientific Word and Scientific WorkPlace 5.5 documents saved with the LaTeX filter.
% Copyright (C) 2005 Mackichan Software, Inc.

\typeout{TCILATEX Macros for Scientific Word and Scientific WorkPlace 5.5 <06 Oct 2005>.}
\typeout{NOTICE:  This macro file is NOT proprietary and may be 
freely copied and distributed.}
\makeatletter

%%%%%%%%%%%%%%%%%%%%%
% pdfTeX related.
\ifx\pdfoutput\relax\let\pdfoutput=\undefined\fi
\newcount\msipdfoutput
\ifx\pdfoutput\undefined
\else
 \ifcase\pdfoutput
 \else 
    \msipdfoutput=1
    \ifx\paperwidth\undefined
    \else
      \ifdim\paperheight=0pt\relax
      \else
        \pdfpageheight\paperheight
      \fi
      \ifdim\paperwidth=0pt\relax
      \else
        \pdfpagewidth\paperwidth
      \fi
    \fi
  \fi  
\fi

%%%%%%%%%%%%%%%%%%%%%
% FMTeXButton
% This is used for putting TeXButtons in the 
% frontmatter of a document. Add a line like
% \QTagDef{FMTeXButton}{101}{} to the filter 
% section of the cst being used. Also add a
% new section containing:
%     [f_101]
%     ALIAS=FMTexButton
%     TAG_TYPE=FIELD
%     TAG_LEADIN=TeX Button:
%
% It also works to put \defs in the preamble after 
% the \input tcilatex

%
%%%%%%%%%%%%%%%%%%%%%%
% macros for time
\newcount\@hour\newcount\@minute\chardef\@x10\chardef\@xv60
\def\tcitime{
\def\@time{%
  \@minute\time\@hour\@minute\divide\@hour\@xv
  \ifnum\@hour<\@x 0\fi\the\@hour:%
  \multiply\@hour\@xv\advance\@minute-\@hour
  \ifnum\@minute<\@x 0\fi\the\@minute
  }}%

%%%%%%%%%%%%%%%%%%%%%%
% macro for hyperref and msihyperref
%\@ifundefined{hyperref}{\def\hyperref#1#2#3#4{#2\ref{#4}#3}}{}

\def\x@hyperref#1#2#3{%
   % Turn off various catcodes before reading parameter 4
   \catcode`\~ = 12
   \catcode`\$ = 12
   \catcode`\_ = 12
   \catcode`\# = 12
   \catcode`\& = 12
   \catcode`\% = 12
   \y@hyperref{#1}{#2}{#3}%
}

\def\y@hyperref#1#2#3#4{%
   #2\ref{#4}#3
   \catcode`\~ = 13
   \catcode`\$ = 3
   \catcode`\_ = 8
   \catcode`\# = 6
   \catcode`\& = 4
   \catcode`\% = 14
}

\@ifundefined{hyperref}{\let\hyperref\x@hyperref}{}
\@ifundefined{msihyperref}{\let\msihyperref\x@hyperref}{}

% macro for external program call
\@ifundefined{qExtProgCall}{\def\qExtProgCall#1#2#3#4#5#6{\relax}}{}
%%%%%%%%%%%%%%%%%%%%%%
%
% macros for graphics
%
%
%
\def\QCTOpt[#1]#2{%
  \def\QCTOptB{#1}
  \def\QCTOptA{#2}
}
\def\QCTNOpt#1{%
  \def\QCTOptA{#1}
  \let\QCTOptB\empty
}
\def\Qct{%
  \@ifnextchar[{%
    \QCTOpt}{\QCTNOpt}
}
\def\QCBOpt[#1]#2{%
  \def\QCBOptB{#1}%
  \def\QCBOptA{#2}%
}
\def\QCBNOpt#1{%
  \def\QCBOptA{#1}%
  \let\QCBOptB\empty
}
\def\Qcb{%
  \@ifnextchar[{%
    \QCBOpt}{\QCBNOpt}%
}
\def\PrepCapArgs{%
  \ifx\QCBOptA\empty
    \ifx\QCTOptA\empty
      {}%
    \else
      \ifx\QCTOptB\empty
        {\QCTOptA}%
      \else
        [\QCTOptB]{\QCTOptA}%
      \fi
    \fi
  \else
    \ifx\QCBOptA\empty
      {}%
    \else
      \ifx\QCBOptB\empty
        {\QCBOptA}%
      \else
        [\QCBOptB]{\QCBOptA}%
      \fi
    \fi
  \fi
}
\newcount\GRAPHICSTYPE
%\GRAPHICSTYPE 0 is for TurboTeX
%\GRAPHICSTYPE 1 is for DVIWindo (PostScript)
%%%(removed)%\GRAPHICSTYPE 2 is for psfig (PostScript)
\GRAPHICSTYPE=\z@
\def\GRAPHICSPS#1{%
 \ifcase\GRAPHICSTYPE%\GRAPHICSTYPE=0
   \special{ps: #1}%
 \or%\GRAPHICSTYPE=1
   \special{language "PS", include "#1"}%
%%%\or%\GRAPHICSTYPE=2
%%%  #1%
 \fi
}%
%
%
%
% \graffile{ body }                                  %#1
%          { contentswidth (scalar)  }               %#2
%          { contentsheight (scalar) }               %#3
%          { vertical shift when in-line (scalar) }  %#4

\def\graffile#1#2#3#4{%
%%% \ifnum\GRAPHICSTYPE=\tw@
%%%  %Following if using psfig
%%%  \@ifundefined{psfig}{\input psfig.tex}{}%
%%%  \psfig{file=#1, height=#3, width=#2}%
%%% \else
  %Following for all others
  % JCS - added BOXTHEFRAME, see below
    \bgroup
	   \@inlabelfalse
       \leavevmode
       \@ifundefined{bbl@deactivate}{\def~{\string~}}{\activesoff}%
        \raise -#4 \BOXTHEFRAME{%
           \hbox to #2{\raise #3\hbox to #2{\null #1\hfil}}}%
    \egroup
}%
%
% A box for drafts
\def\draftbox#1#2#3#4{%
 \leavevmode\raise -#4 \hbox{%
  \frame{\rlap{\protect\tiny #1}\hbox to #2%
   {\vrule height#3 width\z@ depth\z@\hfil}%
  }%
 }%
}%
\newcount\@msidraft
\@msidraft=\z@
\let\nographics=\@msidraft
\newif\ifwasdraft
\wasdraftfalse

%  \GRAPHIC{ body }                                  %#1
%          { draft name }                            %#2
%          { contentswidth (scalar)  }               %#3
%          { contentsheight (scalar) }               %#4
%          { vertical shift when in-line (scalar) }  %#5
\def\GRAPHIC#1#2#3#4#5{%
   \ifnum\@msidraft=\@ne\draftbox{#2}{#3}{#4}{#5}%
   \else\graffile{#1}{#3}{#4}{#5}%
   \fi
}
\def\addtoLaTeXparams#1{%
    \edef\LaTeXparams{\LaTeXparams #1}}%
%
% JCS -  added a switch BoxFrame that can 
% be set by including X in the frame params.
% If set a box is drawn around the frame.

\newif\ifBoxFrame \BoxFramefalse
\newif\ifOverFrame \OverFramefalse
\newif\ifUnderFrame \UnderFramefalse

\def\BOXTHEFRAME#1{%
   \hbox{%
      \ifBoxFrame
         \frame{#1}%
      \else
         {#1}%
      \fi
   }%
}

\def\doFRAMEparams#1{\BoxFramefalse\OverFramefalse\UnderFramefalse\readFRAMEparams#1\end}%
\def\readFRAMEparams#1{%
 \ifx#1\end%
  \let\next=\relax
  \else
  \ifx#1i\dispkind=\z@\fi
  \ifx#1d\dispkind=\@ne\fi
  \ifx#1f\dispkind=\tw@\fi
  \ifx#1t\addtoLaTeXparams{t}\fi
  \ifx#1b\addtoLaTeXparams{b}\fi
  \ifx#1p\addtoLaTeXparams{p}\fi
  \ifx#1h\addtoLaTeXparams{h}\fi
  \ifx#1X\BoxFrametrue\fi
  \ifx#1O\OverFrametrue\fi
  \ifx#1U\UnderFrametrue\fi
  \ifx#1w
    \ifnum\@msidraft=1\wasdrafttrue\else\wasdraftfalse\fi
    \@msidraft=\@ne
  \fi
  \let\next=\readFRAMEparams
  \fi
 \next
 }%
%
%Macro for In-line graphics object
%   \IFRAME{ contentswidth (scalar)  }               %#1
%          { contentsheight (scalar) }               %#2
%          { vertical shift when in-line (scalar) }  %#3
%          { draft name }                            %#4
%          { body }                                  %#5
%          { caption}                                %#6

\def\IFRAME#1#2#3#4#5#6{%
      \bgroup
      \let\QCTOptA\empty
      \let\QCTOptB\empty
      \let\QCBOptA\empty
      \let\QCBOptB\empty
      #6%
      \parindent=0pt
      \leftskip=0pt
      \rightskip=0pt
      \setbox0=\hbox{\QCBOptA}%
      \@tempdima=#1\relax
      \ifOverFrame
          % Do this later
          \typeout{This is not implemented yet}%
          \show\HELP
      \else
         \ifdim\wd0>\@tempdima
            \advance\@tempdima by \@tempdima
            \ifdim\wd0 >\@tempdima
               \setbox1 =\vbox{%
                  \unskip\hbox to \@tempdima{\hfill\GRAPHIC{#5}{#4}{#1}{#2}{#3}\hfill}%
                  \unskip\hbox to \@tempdima{\parbox[b]{\@tempdima}{\QCBOptA}}%
               }%
               \wd1=\@tempdima
            \else
               \textwidth=\wd0
               \setbox1 =\vbox{%
                 \noindent\hbox to \wd0{\hfill\GRAPHIC{#5}{#4}{#1}{#2}{#3}\hfill}\\%
                 \noindent\hbox{\QCBOptA}%
               }%
               \wd1=\wd0
            \fi
         \else
            \ifdim\wd0>0pt
              \hsize=\@tempdima
              \setbox1=\vbox{%
                \unskip\GRAPHIC{#5}{#4}{#1}{#2}{0pt}%
                \break
                \unskip\hbox to \@tempdima{\hfill \QCBOptA\hfill}%
              }%
              \wd1=\@tempdima
           \else
              \hsize=\@tempdima
              \setbox1=\vbox{%
                \unskip\GRAPHIC{#5}{#4}{#1}{#2}{0pt}%
              }%
              \wd1=\@tempdima
           \fi
         \fi
         \@tempdimb=\ht1
         %\advance\@tempdimb by \dp1
         \advance\@tempdimb by -#2
         \advance\@tempdimb by #3
         \leavevmode
         \raise -\@tempdimb \hbox{\box1}%
      \fi
      \egroup%
}%
%
%Macro for Display graphics object
%   \DFRAME{ contentswidth (scalar)  }               %#1
%          { contentsheight (scalar) }               %#2
%          { draft label }                           %#3
%          { name }                                  %#4
%          { caption}                                %#5
\def\DFRAME#1#2#3#4#5{%
  \vspace\topsep
  \hfil\break
  \bgroup
     \leftskip\@flushglue
	 \rightskip\@flushglue
	 \parindent\z@
	 \parfillskip\z@skip
     \let\QCTOptA\empty
     \let\QCTOptB\empty
     \let\QCBOptA\empty
     \let\QCBOptB\empty
	 \vbox\bgroup
        \ifOverFrame 
           #5\QCTOptA\par
        \fi
        \GRAPHIC{#4}{#3}{#1}{#2}{\z@}%
        \ifUnderFrame 
           \break#5\QCBOptA
        \fi
	 \egroup
  \egroup
  \vspace\topsep
  \break
}%
%
%Macro for Floating graphic object
%   \FFRAME{ framedata f|i tbph x F|T }              %#1
%          { contentswidth (scalar)  }               %#2
%          { contentsheight (scalar) }               %#3
%          { caption }                               %#4
%          { label }                                 %#5
%          { draft name }                            %#6
%          { body }                                  %#7
\def\FFRAME#1#2#3#4#5#6#7{%
 %If float.sty loaded and float option is 'h', change to 'H'  (gp) 1998/09/05
  \@ifundefined{floatstyle}
    {%floatstyle undefined (and float.sty not present), no change
     \begin{figure}[#1]%
    }
    {%floatstyle DEFINED
	 \ifx#1h%Only the h parameter, change to H
      \begin{figure}[H]%
	 \else
      \begin{figure}[#1]%
	 \fi
	}
  \let\QCTOptA\empty
  \let\QCTOptB\empty
  \let\QCBOptA\empty
  \let\QCBOptB\empty
  \ifOverFrame
    #4
    \ifx\QCTOptA\empty
    \else
      \ifx\QCTOptB\empty
        \caption{\QCTOptA}%
      \else
        \caption[\QCTOptB]{\QCTOptA}%
      \fi
    \fi
    \ifUnderFrame\else
      \label{#5}%
    \fi
  \else
    \UnderFrametrue%
  \fi
  \begin{center}\GRAPHIC{#7}{#6}{#2}{#3}{\z@}\end{center}%
  \ifUnderFrame
    #4
    \ifx\QCBOptA\empty
      \caption{}%
    \else
      \ifx\QCBOptB\empty
        \caption{\QCBOptA}%
      \else
        \caption[\QCBOptB]{\QCBOptA}%
      \fi
    \fi
    \label{#5}%
  \fi
  \end{figure}%
 }%
%
%
%    \FRAME{ framedata f|i tbph x F|T }              %#1
%          { contentswidth (scalar)  }               %#2
%          { contentsheight (scalar) }               %#3
%          { vertical shift when in-line (scalar) }  %#4
%          { caption }                               %#5
%          { label }                                 %#6
%          { name }                                  %#7
%          { body }                                  %#8
%
%    framedata is a string which can contain the following
%    characters: idftbphxFT
%    Their meaning is as follows:
%             i, d or f : in-line, display, or floating
%             t,b,p,h   : LaTeX floating placement options
%             x         : fit contents box to contents
%             F or T    : Figure or Table. 
%                         Later this can expand
%                         to a more general float class.
%
%
\newcount\dispkind%

\def\makeactives{
  \catcode`\"=\active
  \catcode`\;=\active
  \catcode`\:=\active
  \catcode`\'=\active
  \catcode`\~=\active
}
\bgroup
   \makeactives
   \gdef\activesoff{%
      \def"{\string"}%
      \def;{\string;}%
      \def:{\string:}%
      \def'{\string'}%
      \def~{\string~}%
      %\bbl@deactivate{"}%
      %\bbl@deactivate{;}%
      %\bbl@deactivate{:}%
      %\bbl@deactivate{'}%
    }
\egroup

\def\FRAME#1#2#3#4#5#6#7#8{%
 \bgroup
 \ifnum\@msidraft=\@ne
   \wasdrafttrue
 \else
   \wasdraftfalse%
 \fi
 \def\LaTeXparams{}%
 \dispkind=\z@
 \def\LaTeXparams{}%
 \doFRAMEparams{#1}%
 \ifnum\dispkind=\z@\IFRAME{#2}{#3}{#4}{#7}{#8}{#5}\else
  \ifnum\dispkind=\@ne\DFRAME{#2}{#3}{#7}{#8}{#5}\else
   \ifnum\dispkind=\tw@
    \edef\@tempa{\noexpand\FFRAME{\LaTeXparams}}%
    \@tempa{#2}{#3}{#5}{#6}{#7}{#8}%
    \fi
   \fi
  \fi
  \ifwasdraft\@msidraft=1\else\@msidraft=0\fi{}%
  \egroup
 }%
%
% This macro added to let SW gobble a parameter that
% should not be passed on and expanded. 

\def\TEXUX#1{"texux"}

%
% Macros for text attributes:
%
%
%
%
%%%%%%%%%%%%%%%%%%%%%%%%%%%%%%%%%%%%%%%%%%%%%%%%%%%%%%%%%%%%%%%%%%%%%%%%
%
%
% macros for user - defined functions
%
%
% macro for unit names
%

%
% miscellaneous 
\long\def\QQQ#1#2{%
     \long\expandafter\def\csname#1\endcsname{#2}}%
\@ifundefined{QTP}{\def\QTP#1{}}{}
\@ifundefined{QEXCLUDE}{\def\QEXCLUDE#1{}}{}
\@ifundefined{Qlb}{}{}
\@ifundefined{Qlt}{}{}
\long\def\QQA#1#2{}%
\def\QTR#1#2{{\csname#1\endcsname {#2}}}%
  %	Add aliases for the ulem package

%
%
\def\EXPAND#1[#2]#3{}%
\def\NOEXPAND#1[#2]#3{}%
\def\LaTeXparent#1{}%
\def\ChildStyles#1{}%
\def\ChildDefaults#1{}%
\def\QTagDef#1#2#3{}%

% Constructs added with Scientific Notebook
\@ifundefined{correctchoice}{}{}
\@ifundefined{HTML}{\def\HTML#1{\relax}}{}
\@ifundefined{TCIIcon}{\def\TCIIcon#1#2#3#4{\relax}}{}
\if@compatibility
  \typeout{Not defining UNICODE  U or CustomNote commands for LaTeX 2.09.}
\else
  \providecommand{\UNICODE}[2][]{\protect\rule{.1in}{.1in}}
  \providecommand{\U}[1]{\protect\rule{.1in}{.1in}}
  
\fi

\@ifundefined{lambdabar}{
      
   }{}

%
% Macros for style editor docs
\@ifundefined{StyleEditBeginDoc}{}{}
%
% Macros for footnotes
\def\QQfnmark#1{\footnotemark}

%
% Macros for indexing.
%
\@ifundefined{TCIMAKEINDEX}{}{\makeindex}%
%
% Attempts to avoid problems with other styles
\@ifundefined{abstract}{%
 \def\abstract{%
  \if@twocolumn
   \section*{Abstract (Not appropriate in this style!)}%
   \else \small 
   \begin{center}{\bf Abstract\vspace{-.5em}\vspace{\z@}}\end{center}%
   \quotation 
   \fi
  }%
 }{%
 }%
\@ifundefined{endabstract}{\def\endabstract
  {\if@twocolumn\else\endquotation\fi}}{}%
\@ifundefined{maketitle}{\def\maketitle#1{}}{}%
\@ifundefined{affiliation}{\def\affiliation#1{}}{}%
\@ifundefined{proof}{}{}%
\@ifundefined{endproof}{}{}%
\@ifundefined{newfield}{\def\newfield#1#2{}}{}%
\@ifundefined{chapter}{\def\chapter#1{\par(Chapter head:)#1\par }%
 \newcount\c@chapter}{}%
\@ifundefined{part}{\def\part#1{\par(Part head:)#1\par }}{}%
\@ifundefined{section}{\def\section#1{\par(Section head:)#1\par }}{}%
\@ifundefined{subsection}{\def\subsection#1%
 {\par(Subsection head:)#1\par }}{}%
\@ifundefined{subsubsection}{\def\subsubsection#1%
 {\par(Subsubsection head:)#1\par }}{}%
\@ifundefined{paragraph}{\def\paragraph#1%
 {\par(Subsubsubsection head:)#1\par }}{}%
\@ifundefined{subparagraph}{\def\subparagraph#1%
 {\par(Subsubsubsubsection head:)#1\par }}{}%
%%%%%%%%%%%%%%%%%%%%%%%%%%%%%%%%%%%%%%%%%%%%%%%%%%%%%%%%%%%%%%%%%%%%%%%%
% These symbols are not recognized by LaTeX
\@ifundefined{therefore}{}{}%
\@ifundefined{backepsilon}{}{}%
\@ifundefined{yen}{}{}%
\@ifundefined{registered}{%
   \def\registered{\relax\ifmmode{}\r@gistered
                    \else$\m@th\r@gistered$\fi}%
 \def\r@gistered{^{\ooalign
  {\hfil\raise.07ex\hbox{$\scriptstyle\rm\text{R}$}\hfil\crcr
  \mathhexbox20D}}}}{}%
\@ifundefined{Eth}{}{}%
\@ifundefined{eth}{}{}%
\@ifundefined{Thorn}{}{}%
\@ifundefined{thorn}{}{}%
% A macro to allow any symbol that requires math to appear in text
%
\@ifundefined{degree}{}{}%
%
% macros for T3TeX files
\newdimen\theight
\@ifundefined{Column}{\def\Column{%
 \vadjust{\setbox\z@=\hbox{\scriptsize\quad\quad tcol}%
  \theight=\ht\z@\advance\theight by \dp\z@\advance\theight by \lineskip
  \kern -\theight \vbox to \theight{%
   \rightline{\rlap{\box\z@}}%
   \vss
   }%
  }%
 }}{}%
\@ifundefined{qed}{\def\qed{%
 \ifhmode\unskip\nobreak\fi\ifmmode\ifinner\else\hskip5\p@\fi\fi
 \hbox{\hskip5\p@\vrule width4\p@ height6\p@ depth1.5\p@\hskip\p@}%
 }}{}%
\@ifundefined{cents}{}{}%
\@ifundefined{tciLaplace}{}{}%
\@ifundefined{tciFourier}{}{}%
\@ifundefined{textcurrency}{}{}%
\@ifundefined{texteuro}{}{}%
\@ifundefined{euro}{}{}%
\@ifundefined{textfranc}{}{}%
\@ifundefined{textlira}{}{}%
\@ifundefined{textpeseta}{}{}%
\@ifundefined{miss}{\def\miss{\hbox{\vrule height2\p@ width 2\p@ depth\z@}}}{}%
\@ifundefined{vvert}{}{}%  %always translated to \left| or \right|
\@ifundefined{tcol}{\def\tcol#1{{\baselineskip=6\p@ \vcenter{#1}} \Column}}{}%
\@ifundefined{dB}{}{}%        %dummy entry in column 
\@ifundefined{mB}{}{}%   %column entry
\@ifundefined{nB}{}{}%     %column entry (not math)
\@ifundefined{note}{}{}%
\def\newfmtname{LaTeX2e}
% No longer load latexsym.  This is now handled by SWP, which uses amsfonts if necessary
%
\ifx\fmtname\newfmtname
  \DeclareOldFontCommand{\rm}{\normalfont\rmfamily}{\mathrm}
  \DeclareOldFontCommand{\sf}{\normalfont\sffamily}{\mathsf}
  \DeclareOldFontCommand{\tt}{\normalfont\ttfamily}{\mathtt}
  \DeclareOldFontCommand{\bf}{\normalfont\bfseries}{\mathbf}
  \DeclareOldFontCommand{\it}{\normalfont\itshape}{\mathit}
  \DeclareOldFontCommand{\sl}{\normalfont\slshape}{\@nomath\sl}
  \DeclareOldFontCommand{\sc}{\normalfont\scshape}{\@nomath\sc}
\fi

%
% Greek bold macros
% Redefine all of the math symbols 
% which might be bolded	 - there are 
% probably others to add to this list

\def\alpha{{\Greekmath 010B}}%
\def\beta{{\Greekmath 010C}}%
\def\gamma{{\Greekmath 010D}}%
\def\delta{{\Greekmath 010E}}%
\def\epsilon{{\Greekmath 010F}}%
\def\zeta{{\Greekmath 0110}}%
\def\eta{{\Greekmath 0111}}%
\def\theta{{\Greekmath 0112}}%
\def\iota{{\Greekmath 0113}}%
\def\kappa{{\Greekmath 0114}}%
\def\lambda{{\Greekmath 0115}}%
\def\mu{{\Greekmath 0116}}%
\def\nu{{\Greekmath 0117}}%
\def\xi{{\Greekmath 0118}}%
\def\pi{{\Greekmath 0119}}%
\def\rho{{\Greekmath 011A}}%
\def\sigma{{\Greekmath 011B}}%
\def\tau{{\Greekmath 011C}}%
\def\upsilon{{\Greekmath 011D}}%
\def\phi{{\Greekmath 011E}}%
\def\chi{{\Greekmath 011F}}%
\def\psi{{\Greekmath 0120}}%
\def\omega{{\Greekmath 0121}}%
\def\varepsilon{{\Greekmath 0122}}%
\def\vartheta{{\Greekmath 0123}}%
\def\varpi{{\Greekmath 0124}}%
\def\varrho{{\Greekmath 0125}}%
\def\varsigma{{\Greekmath 0126}}%
\def\varphi{{\Greekmath 0127}}%

\def\nabla{{\Greekmath 0272}}
\def\FindBoldGroup{%
   {\setbox0=\hbox{$\mathbf{x\global\edef\theboldgroup{\the\mathgroup}}$}}%
}

\def\Greekmath#1#2#3#4{%
    \if@compatibility
        \ifnum\mathgroup=\symbold
           \mathchoice{\mbox{\boldmath$\displaystyle\mathchar"#1#2#3#4$}}%
                      {\mbox{\boldmath$\textstyle\mathchar"#1#2#3#4$}}%
                      {\mbox{\boldmath$\scriptstyle\mathchar"#1#2#3#4$}}%
                      {\mbox{\boldmath$\scriptscriptstyle\mathchar"#1#2#3#4$}}%
        \else
           \mathchar"#1#2#3#4% 
        \fi 
    \else 
        \FindBoldGroup
        \ifnum\mathgroup=\theboldgroup % For 2e
           \mathchoice{\mbox{\boldmath$\displaystyle\mathchar"#1#2#3#4$}}%
                      {\mbox{\boldmath$\textstyle\mathchar"#1#2#3#4$}}%
                      {\mbox{\boldmath$\scriptstyle\mathchar"#1#2#3#4$}}%
                      {\mbox{\boldmath$\scriptscriptstyle\mathchar"#1#2#3#4$}}%
        \else
           \mathchar"#1#2#3#4% 
        \fi     	    
	  \fi}

\newif\ifGreekBold  \GreekBoldfalse
\let\SAVEPBF=\pbf
\def\pbf{\GreekBoldtrue\SAVEPBF}%

\@ifundefined{theorem}{}{}
\@ifundefined{lemma}{}{}
\@ifundefined{corollary}{}{}
\@ifundefined{conjecture}{}{}
\@ifundefined{proposition}{}{}
\@ifundefined{axiom}{}{}
\@ifundefined{remark}{}{}
\@ifundefined{example}{\newtheorem{example}{Example}}{}
\@ifundefined{exercise}{}{}
\@ifundefined{definition}{}{}

\@ifundefined{mathletters}{%
  \newcounter{equationnumber}  
  \def\mathletters{%
     \addtocounter{equation}{1}
     \edef\@currentlabel{\theequation}%
     \setcounter{equationnumber}{\c@equation}
     \setcounter{equation}{0}%
     \edef\theequation{\@currentlabel\noexpand\alph{equation}}%
  }
  
}{}

%Logos
\@ifundefined{BibTeX}{%
    \def\BibTeX{{\rm B\kern-.05em{\sc i\kern-.025em b}\kern-.08em
                 T\kern-.1667em\lower.7ex\hbox{E}\kern-.125emX}}}{}%
\@ifundefined{AmS}%
    {\def\AmS{{\protect\usefont{OMS}{cmsy}{m}{n}%
                A\kern-.1667em\lower.5ex\hbox{M}\kern-.125emS}}}{}%
\@ifundefined{AmSTeX}{}{}%
%

% This macro is a fix to eqnarray
\def\@@eqncr{\let\@tempa\relax
    \ifcase\@eqcnt \def\@tempa{& & &}\or \def\@tempa{& &}%
      \else \def\@tempa{&}\fi
     \@tempa
     \if@eqnsw
        \iftag@
           \@taggnum
        \else
           \@eqnnum\stepcounter{equation}%
        \fi
     \fi
     \global\tag@false
     \global\@eqnswtrue
     \global\@eqcnt\z@\cr}

\def\TCItag{\@ifnextchar*{\@TCItagstar}{\@TCItag}}
\def\@TCItag#1{%
    \global\tag@true
    \global\def\@taggnum{(#1)}%
    \global\def\@currentlabel{#1}}
\def\@TCItagstar*#1{%
    \global\tag@true
    \global\def\@taggnum{#1}%
    \global\def\@currentlabel{#1}}
%
%%%%%%%%%%%%%%%%%%%%%%%%%%%%%%%%%%%%%%%%%%%%%%%%%%%%%%%%%%%%%%%%%%%%%
%
%
%
%
%
%
%
%
%
%
%
%
%
%
%
%
%
% Macros for text size operators:
%

\def\tint{\msi@int\textstyle\int}%
\def\tiint{\msi@int\textstyle\iint}%
\def\tiiint{\msi@int\textstyle\iiint}%
\def\tiiiint{\msi@int\textstyle\iiiint}%
\def\tidotsint{\msi@int\textstyle\idotsint}%
\def\toint{\msi@int\textstyle\oint}%

%
%
%
%
%
%
%
%
%
%
%
%
%
%
%Macros for display size operators:
%

\newtoks\temptoksa
\newtoks\temptoksb
\newtoks\temptoksc

\def\msi@int#1#2{%
 \def\@temp{{#1#2\the\temptoksc_{\the\temptoksa}^{\the\temptoksb}}}%   
 \futurelet\@nextcs
 \@int
}

\def\@int{%
   \ifx\@nextcs\limits
      \typeout{Found limits}%
      \temptoksc={\limits}%
	  \let\@next\@intgobble%
   \else\ifx\@nextcs\nolimits
      \typeout{Found nolimits}%
      \temptoksc={\nolimits}%
	  \let\@next\@intgobble%
   \else
      \typeout{Did not find limits or no limits}%
      \temptoksc={}%
      \let\@next\msi@limits%
   \fi\fi
   \@next   
}%

\def\@intgobble#1{%
   \typeout{arg is #1}%
   \msi@limits
}

\def\msi@limits{%
   \temptoksa={}%
   \temptoksb={}%
   \@ifnextchar_{\@limitsa}{\@limitsb}%
}

\def\@limitsa_#1{%
   \temptoksa={#1}%
   \@ifnextchar^{\@limitsc}{\@temp}%
}

\def\@limitsb{%
   \@ifnextchar^{\@limitsc}{\@temp}%
}

\def\@limitsc^#1{%
   \temptoksb={#1}%
   \@ifnextchar_{\@limitsd}{\@temp}%   
}

\def\@limitsd_#1{%
   \temptoksa={#1}%
   \@temp
}

\def\dint{\msi@int\displaystyle\int}%
\def\diint{\msi@int\displaystyle\iint}%
\def\diiint{\msi@int\displaystyle\iiint}%
\def\diiiint{\msi@int\displaystyle\iiiint}%
\def\didotsint{\msi@int\displaystyle\idotsint}%
\def\doint{\msi@int\displaystyle\oint}%

\if@compatibility\else
  % Always load amsmath in LaTeX2e mode
  \RequirePackage{amsmath}
\fi

\def\ExitTCILatex{\makeatother }

\bgroup
\ifx\ds@amstex\relax
   \message{amstex already loaded}\aftergroup\ExitTCILatex
\else
   \@ifpackageloaded{amsmath}%
      {\if@compatibility\message{amsmath already loaded}\fi\aftergroup\ExitTCILatex}
      {}
   \@ifpackageloaded{amstex}%
      {\if@compatibility\message{amstex already loaded}\fi\aftergroup\ExitTCILatex}
      {}
   \@ifpackageloaded{amsgen}%
      {\if@compatibility\message{amsgen already loaded}\fi\aftergroup\ExitTCILatex}
      {}
\fi
\egroup

%Exit if any of the AMS macros are already loaded.
%This is always the case for LaTeX2e mode.

%%%%%%%%%%%%%%%%%%%%%%%%%%%%%%%%%%%%%%%%%%%%%%%%%%%%%%%%%%%%%%%%%%%%%%%%%%
% NOTE: The rest of this file is read only if in LaTeX 2.09 compatibility
% mode. This section is used to define AMS-like constructs in the
% event they have not been defined.
%%%%%%%%%%%%%%%%%%%%%%%%%%%%%%%%%%%%%%%%%%%%%%%%%%%%%%%%%%%%%%%%%%%%%%%%%%
\typeout{TCILATEX defining AMS-like constructs in LaTeX 2.09 COMPATIBILITY MODE}
%%%%%%%%%%%%%%%%%%%%%%%%%%%%%%%%%%%%%%%%%%%%%%%%%%%%%%%%%%%%%%%%%%%%%%%%
%  Macros to define some AMS LaTeX constructs when 
%  AMS LaTeX has not been loaded
% 
% These macros are copied from the AMS-TeX package for doing
% multiple integrals.
%
\let\DOTSI\relax
\def\RIfM@{\relax\ifmmode}%
\def\FN@{\futurelet\next}%
\newcount\intno@
\def\iint{\DOTSI\intno@\tw@\FN@\ints@}%
\def\iiint{\DOTSI\intno@\thr@@\FN@\ints@}%
\def\iiiint{\DOTSI\intno@4 \FN@\ints@}%
\def\idotsint{\DOTSI\intno@\z@\FN@\ints@}%
\def\ints@{\findlimits@\ints@@}%
\newif\iflimtoken@
\newif\iflimits@
\def\findlimits@{\limtoken@true\ifx\next\limits\limits@true
 \else\ifx\next\nolimits\limits@false\else
 \limtoken@false\ifx\ilimits@\nolimits\limits@false\else
 \ifinner\limits@false\else\limits@true\fi\fi\fi\fi}%
\def\multint@{\int\ifnum\intno@=\z@\intdots@                          %1
 \else\intkern@\fi                                                    %2
 \ifnum\intno@>\tw@\int\intkern@\fi                                   %3
 \ifnum\intno@>\thr@@\int\intkern@\fi                                 %4
 \int}%                                                               %5
\def\multintlimits@{\intop\ifnum\intno@=\z@\intdots@\else\intkern@\fi
 \ifnum\intno@>\tw@\intop\intkern@\fi
 \ifnum\intno@>\thr@@\intop\intkern@\fi\intop}%
\def\intic@{%
    \mathchoice{\hskip.5em}{\hskip.4em}{\hskip.4em}{\hskip.4em}}%
\def\negintic@{\mathchoice
 {\hskip-.5em}{\hskip-.4em}{\hskip-.4em}{\hskip-.4em}}%
\def\ints@@{\iflimtoken@                                              %1
 \def\ints@@@{\iflimits@\negintic@
   \mathop{\intic@\multintlimits@}\limits                             %2
  \else\multint@\nolimits\fi                                          %3
  \eat@}%                                                             %4
 \else                                                                %5
 \def\ints@@@{\iflimits@\negintic@
  \mathop{\intic@\multintlimits@}\limits\else
  \multint@\nolimits\fi}\fi\ints@@@}%
\def\intkern@{\mathchoice{\!\!\!}{\!\!}{\!\!}{\!\!}}%
\def\plaincdots@{\mathinner{\cdotp\cdotp\cdotp}}%
\def\intdots@{\mathchoice{\plaincdots@}%
 {{\cdotp}\mkern1.5mu{\cdotp}\mkern1.5mu{\cdotp}}%
 {{\cdotp}\mkern1mu{\cdotp}\mkern1mu{\cdotp}}%
 {{\cdotp}\mkern1mu{\cdotp}\mkern1mu{\cdotp}}}%
%
%
%  These macros are for doing the AMS \text{} construct
%
\def\RIfM@{\relax\protect\ifmmode}
\def\text{\RIfM@\expandafter\text@\else\expandafter\mbox\fi}
\let\nfss@text\text
\def\text@#1{\mathchoice
   {\textdef@\displaystyle\f@size{#1}}%
   {\textdef@\textstyle\tf@size{\firstchoice@false #1}}%
   {\textdef@\textstyle\sf@size{\firstchoice@false #1}}%
   {\textdef@\textstyle \ssf@size{\firstchoice@false #1}}%
   \glb@settings}

\def\textdef@#1#2#3{\hbox{{%
                    \everymath{#1}%
                    \let\f@size#2\selectfont
                    #3}}}
\newif\iffirstchoice@
\firstchoice@true
%
%These are the AMS constructs for multiline limits.
%
\def\Let@{\relax\iffalse{\fi\let\\=\cr\iffalse}\fi}%
\def\vspace@{\def\vspace##1{\crcr\noalign{\vskip##1\relax}}}%
\def\multilimits@{\bgroup\vspace@\Let@
 \baselineskip\fontdimen10 \scriptfont\tw@
 \advance\baselineskip\fontdimen12 \scriptfont\tw@
 \lineskip\thr@@\fontdimen8 \scriptfont\thr@@
 \lineskiplimit\lineskip
 \vbox\bgroup\ialign\bgroup\hfil$\m@th\scriptstyle{##}$\hfil\crcr}%
\def\Sb{_\multilimits@}%
\def\endSb{\crcr\egroup\egroup\egroup}%
\def\Sp{^\multilimits@}%

%
%
%These are AMS constructs for horizontal arrows
%
\newdimen\ex@
\ex@.2326ex
\def\rightarrowfill@#1{$#1\m@th\mathord-\mkern-6mu\cleaders
 \hbox{$#1\mkern-2mu\mathord-\mkern-2mu$}\hfill
 \mkern-6mu\mathord\rightarrow$}%
\def\leftarrowfill@#1{$#1\m@th\mathord\leftarrow\mkern-6mu\cleaders
 \hbox{$#1\mkern-2mu\mathord-\mkern-2mu$}\hfill\mkern-6mu\mathord-$}%
\def\leftrightarrowfill@#1{$#1\m@th\mathord\leftarrow
\mkern-6mu\cleaders
 \hbox{$#1\mkern-2mu\mathord-\mkern-2mu$}\hfill
 \mkern-6mu\mathord\rightarrow$}%
\def\overrightarrow{\mathpalette\overrightarrow@}%
\def\overrightarrow@#1#2{\vbox{\ialign{##\crcr\rightarrowfill@#1\crcr
 \noalign{\kern-\ex@\nointerlineskip}$\m@th\hfil#1#2\hfil$\crcr}}}%

\def\overleftarrow{\mathpalette\overleftarrow@}%
\def\overleftarrow@#1#2{\vbox{\ialign{##\crcr\leftarrowfill@#1\crcr
 \noalign{\kern-\ex@\nointerlineskip}$\m@th\hfil#1#2\hfil$\crcr}}}%
\def\overleftrightarrow{\mathpalette\overleftrightarrow@}%
\def\overleftrightarrow@#1#2{\vbox{\ialign{##\crcr
   \leftrightarrowfill@#1\crcr
 \noalign{\kern-\ex@\nointerlineskip}$\m@th\hfil#1#2\hfil$\crcr}}}%
\def\underrightarrow{\mathpalette\underrightarrow@}%
\def\underrightarrow@#1#2{\vtop{\ialign{##\crcr$\m@th\hfil#1#2\hfil
  $\crcr\noalign{\nointerlineskip}\rightarrowfill@#1\crcr}}}%

\def\underleftarrow{\mathpalette\underleftarrow@}%
\def\underleftarrow@#1#2{\vtop{\ialign{##\crcr$\m@th\hfil#1#2\hfil
  $\crcr\noalign{\nointerlineskip}\leftarrowfill@#1\crcr}}}%
\def\underleftrightarrow{\mathpalette\underleftrightarrow@}%
\def\underleftrightarrow@#1#2{\vtop{\ialign{##\crcr$\m@th
  \hfil#1#2\hfil$\crcr
 \noalign{\nointerlineskip}\leftrightarrowfill@#1\crcr}}}%
%%%%%%%%%%%%%%%%%%%%%

\def\qopnamewl@#1{\mathop{\operator@font#1}\nlimits@}
\let\nlimits@\displaylimits
\def\setboxz@h{\setbox\z@\hbox}

\def\varlim@#1#2{\mathop{\vtop{\ialign{##\crcr
 \hfil$#1\m@th\operator@font lim$\hfil\crcr
 \noalign{\nointerlineskip}#2#1\crcr
 \noalign{\nointerlineskip\kern-\ex@}\crcr}}}}

 \def\rightarrowfill@#1{\m@th\setboxz@h{$#1-$}\ht\z@\z@
  $#1\copy\z@\mkern-6mu\cleaders
  \hbox{$#1\mkern-2mu\box\z@\mkern-2mu$}\hfill
  \mkern-6mu\mathord\rightarrow$}
\def\leftarrowfill@#1{\m@th\setboxz@h{$#1-$}\ht\z@\z@
  $#1\mathord\leftarrow\mkern-6mu\cleaders
  \hbox{$#1\mkern-2mu\copy\z@\mkern-2mu$}\hfill
  \mkern-6mu\box\z@$}

\def\projlim{\qopnamewl@{proj\,lim}}
\def\injlim{\qopnamewl@{inj\,lim}}
\def\varinjlim{\mathpalette\varlim@\rightarrowfill@}
\def\varprojlim{\mathpalette\varlim@\leftarrowfill@}
\def\varliminf{\mathpalette\varliminf@{}}
\def\varliminf@#1{\mathop{\underline{\vrule\@depth.2\ex@\@width\z@
   \hbox{$#1\m@th\operator@font lim$}}}}
\def\varlimsup{\mathpalette\varlimsup@{}}
\def\varlimsup@#1{\mathop{\overline
  {\hbox{$#1\m@th\operator@font lim$}}}}

%
%Companion to stackrel
%
%
%
% These are AMS environments that will be defined to
% be verbatims if amstex has not actually been 
% loaded
%
%
\begingroup \catcode `|=0 \catcode `[= 1
\catcode`]=2 \catcode `\{=12 \catcode `\}=12
\catcode`\\=12 
|gdef|@alignverbatim#1\end{align}[#1|end[align]]
|gdef|@salignverbatim#1\end{align*}[#1|end[align*]]

|gdef|@alignatverbatim#1\end{alignat}[#1|end[alignat]]
|gdef|@salignatverbatim#1\end{alignat*}[#1|end[alignat*]]

|gdef|@xalignatverbatim#1\end{xalignat}[#1|end[xalignat]]
|gdef|@sxalignatverbatim#1\end{xalignat*}[#1|end[xalignat*]]

|gdef|@gatherverbatim#1\end{gather}[#1|end[gather]]
|gdef|@sgatherverbatim#1\end{gather*}[#1|end[gather*]]

|gdef|@gatherverbatim#1\end{gather}[#1|end[gather]]
|gdef|@sgatherverbatim#1\end{gather*}[#1|end[gather*]]

|gdef|@multilineverbatim#1\end{multiline}[#1|end[multiline]]
|gdef|@smultilineverbatim#1\end{multiline*}[#1|end[multiline*]]

|gdef|@arraxverbatim#1\end{arrax}[#1|end[arrax]]
|gdef|@sarraxverbatim#1\end{arrax*}[#1|end[arrax*]]

|gdef|@tabulaxverbatim#1\end{tabulax}[#1|end[tabulax]]
|gdef|@stabulaxverbatim#1\end{tabulax*}[#1|end[tabulax*]]

|endgroup

\def\align{\@verbatim \frenchspacing\@vobeyspaces \@alignverbatim
You are using the "align" environment in a style in which it is not defined.}

\@namedef{align*}{\@verbatim\@salignverbatim
You are using the "align*" environment in a style in which it is not defined.}
\expandafter\let\csname endalign*\endcsname =\endtrivlist

\def\alignat{\@verbatim \frenchspacing\@vobeyspaces \@alignatverbatim
You are using the "alignat" environment in a style in which it is not defined.}

\@namedef{alignat*}{\@verbatim\@salignatverbatim
You are using the "alignat*" environment in a style in which it is not defined.}
\expandafter\let\csname endalignat*\endcsname =\endtrivlist

\def\xalignat{\@verbatim \frenchspacing\@vobeyspaces \@xalignatverbatim
You are using the "xalignat" environment in a style in which it is not defined.}

\@namedef{xalignat*}{\@verbatim\@sxalignatverbatim
You are using the "xalignat*" environment in a style in which it is not defined.}
\expandafter\let\csname endxalignat*\endcsname =\endtrivlist

\def\gather{\@verbatim \frenchspacing\@vobeyspaces \@gatherverbatim
You are using the "gather" environment in a style in which it is not defined.}

\@namedef{gather*}{\@verbatim\@sgatherverbatim
You are using the "gather*" environment in a style in which it is not defined.}
\expandafter\let\csname endgather*\endcsname =\endtrivlist

\def\multiline{\@verbatim \frenchspacing\@vobeyspaces \@multilineverbatim
You are using the "multiline" environment in a style in which it is not defined.}

\@namedef{multiline*}{\@verbatim\@smultilineverbatim
You are using the "multiline*" environment in a style in which it is not defined.}
\expandafter\let\csname endmultiline*\endcsname =\endtrivlist

\def\arrax{\@verbatim \frenchspacing\@vobeyspaces \@arraxverbatim
You are using a type of "array" construct that is only allowed in AmS-LaTeX.}

\def\tabulax{\@verbatim \frenchspacing\@vobeyspaces \@tabulaxverbatim
You are using a type of "tabular" construct that is only allowed in AmS-LaTeX.}

\@namedef{arrax*}{\@verbatim\@sarraxverbatim
You are using a type of "array*" construct that is only allowed in AmS-LaTeX.}
\expandafter\let\csname endarrax*\endcsname =\endtrivlist

\@namedef{tabulax*}{\@verbatim\@stabulaxverbatim
You are using a type of "tabular*" construct that is only allowed in AmS-LaTeX.}
\expandafter\let\csname endtabulax*\endcsname =\endtrivlist

% macro to simulate ams tag construct

% This macro is a fix to the equation environment
 \def\endequation{%
     \ifmmode\ifinner % FLEQN hack
      \iftag@
        \addtocounter{equation}{-1} % undo the increment made in the begin part
        $\hfil
           \displaywidth\linewidth\@taggnum\egroup \endtrivlist
        \global\tag@false
        \global\@ignoretrue   
      \else
        $\hfil
           \displaywidth\linewidth\@eqnnum\egroup \endtrivlist
        \global\tag@false
        \global\@ignoretrue 
      \fi
     \else   
      \iftag@
        \addtocounter{equation}{-1} % undo the increment made in the begin part
        \eqno \hbox{\@taggnum}
        \global\tag@false%
        $$\global\@ignoretrue
      \else
        \eqno \hbox{\@eqnnum}% $$ BRACE MATCHING HACK
        $$\global\@ignoretrue
      \fi
     \fi\fi
 } 

 \newif\iftag@ \tag@false
 
 \def\TCItag{\@ifnextchar*{\@TCItagstar}{\@TCItag}}
 \def\@TCItag#1{%
     \global\tag@true
     \global\def\@taggnum{(#1)}%
     \global\def\@currentlabel{#1}}
 \def\@TCItagstar*#1{%
     \global\tag@true
     \global\def\@taggnum{#1}%
     \global\def\@currentlabel{#1}}

  \@ifundefined{tag}{
     \def\tag{\@ifnextchar*{\@tagstar}{\@tag}}
     \def\@tag#1{%
         \global\tag@true
         \global\def\@taggnum{(#1)}}
     \def\@tagstar*#1{%
         \global\tag@true
         \global\def\@taggnum{#1}}
  }{}

%
%
%
%
%

% Do not add anything to the end of this file.  
% The last section of the file is loaded only if 
% amstex has not been.
\makeatother
 
                                                          % paper

%\IEEEoverridecommandlockouts                              % This command is only
                                                          % needed if you want to
                                                          % use the \thanks command
%\overrideIEEEmargins
% See the \addtolength command later in the file to balance the column lengths
% on the last page of the document

% The following packages can be found on http:\\www.ctan.org
%\usepackage{graphics} % for pdf, bitmapped graphics files
%\usepackage{epsfig} % for postscript graphics files
%\usepackage{mathptmx} % assumes new font selection scheme installed
%\usepackage{times} % assumes new font selection scheme installed
%\usepackage{amsmath} % assumes amsmath package installed
%\usepackage{amssymb}  % assumes amsmath package installed

\title{\LARGE \bf
Distributed Nash Equilibrium Seeking by A Consensus Based Approach}

%\author{ \parbox{3 in}{\centering Huibert Kwakernaak*
%         \thanks{*Use the $\backslash$thanks command to put information here}\\
%         Faculty of Electrical Engineering, Mathematics and Computer Science\\
%         University of Twente\\
%         7500 AE Enschede, The Netherlands\\
%         {\tt\small h.kwakernaak@autsubmit.com}}
%         \hspace*{ 0.5 in}
%         \parbox{3 in}{ \centering Pradeep Misra**
%         \thanks{**The footnote marks may be inserted manually}\\
%        Department of Electrical Engineering \\
%         Wright State University\\
%         Dayton, OH 45435, USA\\
%         {\tt\small pmisra@cs.wright.edu}}
%}

\author{Maojiao Ye
        and Guoqiang Hu % <-this % stops a space
\thanks{M. Ye and G. Hu are with the School of Electrical and Electronic Engineering, Nanyang
Technological University, 639798, Singapore  (Email: mjye@ntu.edu.sg,gqhu@ntu.edu.sg).}
\thanks{This work was supported by Singapore Economic Development Board under EIRP grant S14-1172-NRF EIRP-IHL.}}

\begin{document}

\maketitle
\thispagestyle{empty}
\pagestyle{empty}

%%%%%%%%%%%%%%%%%%%%%%%%%%%%%%%%%%%%%%%%%%%%%%%%%%%%%%%%%%%%%%%%%%%%%%%%%%%%%%%%
\begin{abstract}
In this paper, Nash equilibrium seeking among a network of players is considered. Different from many existing works on Nash equilibrium seeking in non-cooperative games, the players considered in this paper cannot directly observe the actions of the players who are not their neighbors. Instead, the players are supposed to be capable of communicating with each other via an undirected and connected communication graph. By a synthesis of a leader-following consensus protocol and the gradient play, a distributed Nash equilibrium seeking strategy is proposed for the non-cooperative games. Analytical analysis on the convergence of the players' actions to the Nash equilibrium is conducted via Lyapunov stability analysis. For games with non-quadratic payoffs, where multiple isolated Nash equilibria may coexist in the game, a local convergence result is derived under certain conditions. Then,
%potential games with strongly concave potential functions are investigated, in which non-local convergence results are obtained under certain conditions. Motivated by the non-local convergence results for potential games,
a stronger condition is provided to derive a non-local convergence result for the non-quadratic games. For quadratic games, it is shown that the proposed seeking strategy enables the players' actions to converge to the Nash equilibrium globally under the given conditions. Numerical examples are provided to verify the effectiveness of the proposed seeking strategy.
\end{abstract}
\begin{keywords}
Nash equilibrium; gradient play; leader-following consensus; neighboring communication
\end{keywords}

%%%%%%%%%%%%%%%%%%%%%%%%%%%%%%%%%%%%%%%%%%%%%%%%%%%%%%%%%%%%%%%%%%%%%%%%%%%%%%%%
\section{INTRODUCTION}

The past decade witnessed the penetration of game theory into various research fields including biology, economy, computer science, just to name a few.  With the development of game theory, Nash equilibrium seeking in non-cooperative games emerges to be of both theoretical significance and practical relevance
(e.g., see \cite{Frihauf12}-\cite{NekoueiTSG15} and the references therein).
%\cite{PozoTPS11,RatliffAAC13,VamvoudakisAT12,PovedaACC15,LiTAC14,GhaersifardAT13,zhucdc,zhu1,Schiro13,Frihauf12,StankoviTAC12,YETAC15,Povedaat15,Liusiam11}.

The authors in \cite{PozoTPS11} formulated pure strategy Nash equilibria seeking as a mixed-integer linear programming problem for pool-based electricity market.
Gradient play was leveraged for finding differential Nash equilibria in continuous games in \cite{RaltiffAAC13}. Dynamic fictitious play and gradient play were exploited in \cite{ArslanCDC01} for a continuous-time form of repeated matrix games. Policy evaluation and policy improvement were utilized for the computation of the Nash equilibrium in differential graphical games \cite{VamvoudakisAT12}. The discrete-time stochastic algorithm developed in \cite{PovedaACC15} allows the players to take actions in both simultaneous and asynchronous fashions. Based on the state-based potential games, the authors in \cite{LiTAC14} considered game design for distributed optimization problems, in which a distributed process was proposed to obtain the equilibrium. By utilizing the saddle point dynamics, convergence to the Nash equilibria of a two-network zero-sum game was derived in \cite{GhaersifardAT13}. Switching communications were considered for the two-network zero-sum games in \cite{YOUTAC15}.
Nash equilibrium seeking in generalized convex games was considered in \cite{zhucdc}-\cite{Schiro13}.
%\cite{zhucdc,zhu1,Schiro13}.
The authors in \cite{zhucdc} and \cite{zhu1} solved the generalized convex game by a discrete-time distributed algorithm and Lemke's method was adapted for the computation of generalized Nash equilibrium in convex games with quadratic payoffs subject to linear constraints in \cite{Schiro13}.
Besides, extremum seeking based approaches were proposed to seek for the Nash equilibrium (e.g., see \cite{Frihauf12} and \cite{StankoviTAC12}-\cite{DurrAT13}). %\cite{Frihauf12,StankoviTAC12,Liusiam11,YETAC15,Povedaat15})
These methods vary from integrator-type extremum seeking \cite{Frihauf12}, discrete-time extremum seeking \cite{StankoviTAC12}, stochastic extremum seeking \cite{Liusiam11}, Shahshahani gradient-like extremum seeking \cite{Povedaat15}, to Lie bracket approximation based extremum seeking \cite{DurrAT13}, etc.
A common characteristic of these methods is that no explicit model information is required for the implementation of the methods.

In non-cooperative games, the players' payoff functions are determined by the players' own actions, together with the other players' actions \cite{RosenEco65}. Hence, a body of the existing works require the players' observations over their opponents' actions to search for the Nash equilibrium. However, full communication is impractical in many engineering systems (e.g., multi-agent systems, ad hoc networks) \cite{SalehisadaghianiCDC14}.
Motivated by the penetration of the game theoretic approaches into cooperative control and distributed optimization problems in engineering systems where full communication is not available (see, e.g.,  \cite{VamvoudakisAT12,LiTAC14,MardenSMC09,TanakaCDC13}), this paper addresses Nash equilibrium seeking under local communication network, i.e., the players communicate with their neighbors only.
%\sout{This limitation motivates us to consider Nash seeking under local communication network, i.e., the players communicate with their neighbors only.}

To solve games with limited information, the main idea of this paper is to utilize a consensus protocol to broadcast local information. Consensus problems have been extensively investigated in the existing literature (e.g., see \cite{CSM07}-\cite{Menon14} \cite{Andreasson14,Hug2015}).
%\cite{Lewis14,Ren08,LiTAC11,WenSCL13,Freeman06,Menon14}
For instance, a class of consensus controllers were proposed for networked dynamical systems in \cite{Andreasson14} and a consensus based approach was studied in  \cite{Hug2015} for distributed coordination of the generation, load and storage devices in a microgrid.
In particular, leader-following consensus concerns with the synchronization of the agents' states to a common value, which is equal to the reference signal provided by the leader \cite{HuSCL12}. The proposed Nash equilibrium seeking strategy is based on an adaptation of a leader-following consensus protocol and the gradient play. More specifically, each agent acts as a virtual leader to provide its action as the reference signal and the agents generate their estimates on the players' actions by utilizing a leader-following consensus protocol. Based on the estimates, the gradient play is implemented for each player.

%Since the information is exchanged among the players, we consider the problem under the assumption that the players are truthful. Being truthful is a dominant strategy for the players in games that are strategy-proof, i.e., the players have no incentive to report fake data to their opponents. For example, in \cite{MohsenianTSG10}, the authors considered peak-to-average ratio minimization and energy cost minimization in smart grids. The designed game ensures that any untruthful announcement of a player's action increases its own cost thus preventing the players from being untruthful.  Strategy-proof can be ensured by using mechanism design (e.g., Vickrey-Clarke-Groves (VCG) mechanism in \cite{NekoueiTSG15}). However, mechanism design is beyond the scope of the paper and we focus on the development of Nash equilibrium seeking strategies for games with incomplete information under the assumption that the players adopt the given strategies to update their actions.

\textbf{Related works:} Two-network zero sum games were investigated in \cite{GhaersifardAT13} and \cite{YOUTAC15} under communication graphs. Distributed learning for games on communication graph was investigated in \cite{Swenson12} and \cite{Swenson15} for large-scale multi-agent games by an adaptation of the fictitious play. Though the communication setting in \cite{Swenson15} was similar to the presented work, the authors considered games with identical permutation-invariant utilities or exact potential games with a permutation-invariant potential function, which are distinct from the games considered in this paper. In \cite{KoshalCDC12}, the authors considered Nash equilibrium seeking for aggregative games on graph, where each player's payoff function depends on their own actions and an aggregate of all the players' actions. A discrete-time gossip algorithm was proposed to solve it. A similar problem was addressed in \cite{YE15} for the energy consumption game in smart grids. The continuous-time method in \cite{YE15} was based on a dynamic average consensus protocol and the primal-dual dynamics. The idea on solving games without using full information from all the players was then generalized in \cite{SalehisadaghianiCDC14}, where the players' payoff functions depend on the players' actions in a more general manner. The Nash equilibrium was characterized by a variational inequality and a gossip-based algorithm was proposed. The players were equipped with a waking clock and they updated their actions asynchronously to reach the Nash equilibrium. Different from \cite{SalehisadaghianiCDC14}, we consider the Nash equilibrium seeking problem in a deterministic and continuous-time scenario. The continuous-time algorithm activates the powerful analysis tools in control theory to solve games. Compared with the existing works, the main contributions of the paper are summarized as follows.

\begin{enumerate}
  \item Nash equilibrium seeking for non-cooperative games, where the players have no direct access to the actions of the players who are not their neighbors, is investigated in this paper. Based on a leader-following consensus protocol and the gradient play, a Nash equilibrium seeking strategy is designed. In the proposed algorithm, the players only need to communicate with their neighbors on their estimates of the players' actions. Avoiding full communication among the players broadens the applicability of game theory to engineering systems where only local communication is attainable.
  \item The convergence of the players' actions to the Nash equilibrium by utilizing the proposed seeking strategy is analytically explored. Based on the Lyapunov stability analysis, it is shown that the proposed method enables the players' actions to converge to the Nash equilibrium under certain conditions. Non-quadratic games are firstly investigated followed by quadratic games.
\end{enumerate}

The rest of the paper is organized as follows. The problem is formulated in Section \ref{prob_form} and the main results are presented in Section \ref{main_res_ne}. In the main result part, non-quadratic games are firstly investigated followed by quadratic games. Numerical examples are presented in Section \ref{numer_example} to verify the effectiveness of the proposed method. Conclusions are given in Section \ref{concl_a}.

%%%%%%%%%%%%%%%%%%%%%%%%%%%%%%%%%%%%%%%%%%%%%%%%%%%%%%%%%%%%%%%%%%%%%
\section{Problem Formulation}\label{prob_form}
\begin{Problem} Consider a game with $N$ players. The set of players is denoted by $\mathcal{N}=\{1,2,\cdots,N\}.$ The payoff function of player $i$ is $f_i(\mathbf{x}),$ where $\mathbf{x}=[x_1,x_2,\cdots,x_N]^T\in R^N$ is the vector of the players' actions and $x_i\in R$ is the action of player $i$. Suppose that if player $j$ is not a neighbor of player $i$, then, player $i$ has no direct access to player $j$'s action.  Given that the Nash equilibrium of the game exists, design a Nash equilibrium seeking strategy that can be adopted by the players to learn the Nash equilibrium of the non-cooperative game.
\end{Problem}

For the convenience of the readers, the definition of the Nash equilibrium is given below.

\emph{Nash equilibrium} is an action profile
on which no player can gain more payoff by unilaterally changing its own
action, i.e., an action profile $\mathbf{x}^*=(x_{i}^{\ast},\mathbf{x}_{-i}^{\ast})$ is the
Nash equilibrium if \cite{NASH51}
\begin{equation}
 f_{i}(x_{i}^{\ast},\mathbf{x}_{-i}^{\ast})\geq f_{i}(x_{i},\mathbf{x}_{-i}^{\ast}),\forall i\in \mathcal{N},
 \end{equation}
 where $\mathbf{x}_{-i}=[x_1,x_2,\cdots,x_{i-1},x_{i+1},\cdots,x_N]^T.$ Note that $f_i(\mathbf{x})$ and $\mathbf{x}$ might alternatively be written as $f_i(x_i,\mathbf{x}_{-i})$ and $(x_i,\mathbf{x}_{-i})$, respectively, in this paper.

\begin{Remark}
The objective for the players in the game is different from the objective of the agents in distributed optimization problems. Given the other players' actions, the players in the game intend to maximize their own payoffs by adjusting their own actions. In contrast, the agents engaged in distributed optimization problems collaboratively maximize the sum of the agents' objective functions, i.e.,
\begin{equation}
\text{max}_{\mathbf{x}} \sum_{i=1}^N f_i(\mathbf{x}),
\end{equation}
where $f_i(\mathbf{x})$ is the objective function of agent $i$ and $\mathbf{x}$ is the vector of the decision variables (see, e.g., \cite{YECST}).
\end{Remark}

The following assumptions on the communication graph (see Section \ref{gra} for the related definitions) and the payoff functions will be utilized in the rest of the paper.
\begin{Assumption}\label{ASS_comm}
The players can communicate with each other via an undirected and connected communication graph.
\end{Assumption}
\begin{Assumption}\label{ASS_func_1}
The players' payoff functions $f_i(\mathbf{x}),\forall i\in \mathcal{N}$  are $\mathcal{C}^2$ functions.
\end{Assumption}

\section{Main Results}\label{main_res_ne}

In this section, a distributed Nash equilibrium seeking strategy will be proposed based on a leader-following consensus protocol and the gradient play. Non-quadratic games are firstly considered followed by quadratic games.

\emph{Strategy design:}
Noticing that the players have no direct access to the actions of the players who are not their neighbors, we suppose that each player generates estimates on the players' actions. Let $\mathbf{y}_i=[y_{i1},y_{i2},\cdots,y_{iN}]^T\in R^N$ denote player $i$'s estimates on the players' actions and $y_{ij}$ is player $i$'s estimate on player $j$'s action. Then, the action of player $i$ is updated according to
\begin{equation}\label{cons_xx}
\dot{x}_{i}=k_i \frac{\partial f_i}{\partial x_{i}}(\mathbf{y}_i),i\in \mathcal{N},
\end{equation}
where $k_i=\delta \bar{k}_i$, $\delta$ is a small positive parameter and $ \bar{k}_{i}$ is a positive, fixed parameter. Furthermore, $y_{ij}, \forall i,j\in \mathcal{N}$ are generated by
\begin{equation}\label{NE_ave_cons_1}
\dot{y}_{ij}=-\left(\sum_{k=1}^{N}a_{ik}(y_{ij}-y_{kj})+a_{ij}(y_{ij}-x_j)\right),
\end{equation}
where $a_{ij}$ is the element on the $i$th row and $j$th column of the adjacency matrix of the communication graph.

\emph{Insight into the strategy design:}
Let $\tau=\delta t$. Then, at the $\tau$-time scale,
\begin{equation}
\begin{aligned} \frac{dx_i}{d\tau}&=\bar{k}_i\frac{\partial f_i}{\partial x_i}(\mathbf{y}_i),\\
\delta\frac{dy_{ij}}{d\tau}&=-\left(\sum_{k=1}^Na_{ik}(y_{ij}-y_{kj})+a_{ij}(y_{ij}-x_j)\right),
\end{aligned}
\end{equation}
$\forall i,j\in \mathcal{N}.$ Define $y_{ij}^q$ as the quasi-steady state of $y_{ij},$ for all  $i,j\in \mathcal{N},$ i.e.,   $y_{ij}^q$ for $i,j\in \mathcal{N}$ satisfy
\begin{equation}
-(\sum_{k=1}^Na_{ik}(y_{ij}^q-y_{kj}^q)+a_{ij}(y_{ij}^q-x_j))=0, \forall i,j\in \mathcal{N}.
\end{equation}
Then, $y_{ij}^q=x_j, \forall i,j\in \mathcal{N}$ as the communication graph is undirected and connected.

Letting $\delta=0$ ``freezes" $y_{ij},\forall i,j\in \mathcal{N}$ on the quasi-steady state  on which $y_{ij}=y_{ij}^q=x_j,\forall i,j\in \mathcal{N}.$ Then, the reduced-system is
\begin{equation}
\frac{dx_i}{d\tau}=\bar{k}_i\frac{\partial f_i}{\partial x_i}(\mathbf{x}),\forall i\in \mathcal{N}
\end{equation}
by which the convergence to the Nash equilibrium can be derived under certain conditions (see, e.g., \cite{Frihauf12} and \cite{RaltiffAAC13}).

%The following results will be used in the subsequent analysis.
%
%Suppose that the communication graph is undirected and connected. Then,
%\begin{enumerate}
%  \item The matrix $H_1=-(L\otimes I_{N}+B_0)$, where $I_{N}$ denotes an $N\times N$ dimensional identity matrix and $B_0$ is a diagonal matrix with the diagonal elements being $a_{11},a_{12},\cdots,a_{1N},a_{21},\cdots,a_{NN},$ successively, is Hurwitz.
%  \item By Theorem \ref{The_1}, the updating strategy in \eqref{NE_ave_cons_1} enables $y_{ij}$ to exponentially converge to $x_j,$ $\forall i,j \in \mathds{N}$, for fixed $x_{j},j\in \mathds{N}$.
%\end{enumerate}

\subsection{Games with Non-quadratic Payoffs}\label{non_ga}

In the following, we show that the seeking strategy in \eqref{cons_xx} and \eqref{NE_ave_cons_1} enables the players' actions to converge to the Nash equilibrium. To facilitate the subsequent analysis, the following assumptions are made.

\begin{Assumption}\label{ASS_2}
There exists at least one, possibly multiple Nash equilibria $\mathbf{x}^*=[x_1^*,x_2^*,\cdots,x_N^*]^T$ such that for all $i\in \mathcal{N},$
\begin{equation*}
\frac{\partial f_i}{\partial x_i}(\mathbf{x}^*)=0,\frac{\partial^2 f_i}{\partial x_i^2}(\mathbf{x}^*)<0.
\end{equation*}
\end{Assumption}

\begin{Assumption}\label{ASS_2_1}
The matrix
\begin{equation*}
B=\small{\left[
                \begin{array}{cccc}
                  \frac{\partial ^2f_1}{\partial x_1^2}(\mathbf{x}^*) & \frac{\partial ^2f_1}{\partial x_1 \partial x_2}(\mathbf{x}^*) & \cdots &  \frac{\partial ^2f_1}{\partial x_1 \partial x_N}(\mathbf{x}^*) \\
                  \frac{\partial ^2f_2}{ \partial x_2 \partial x_1}(\mathbf{x}^*) & \frac{\partial ^2f_2}{\partial x_2^2}(\mathbf{x}^*) &  &  \frac{\partial ^2f_2}{ \partial x_2 \partial x_N}(\mathbf{x}^*) \\
                  \vdots &  & \ddots &  \\
                  \frac{\partial ^2f_N}{\partial x_N \partial x_1 }(\mathbf{x}^*) &\frac{\partial ^2f_N}{\partial x_N \partial x_2 }(\mathbf{x}^*)  &  & \frac{\partial ^2f_N}{\partial x_N^2}(\mathbf{x}^*) \\
                \end{array}
              \right]},
\end{equation*}
is strictly diagonally dominant, i.e., $|\frac{\partial ^2f_i}{\partial x_i^2}(\mathbf{x}^*)|>\sum_{j=1,j\neq i}^N |\frac{\partial ^2f_i}{\partial x_i\partial x_j}(\mathbf{x}^*)|,\forall i\in \mathcal{N}$  \cite{Horn85}.
\end{Assumption}

\begin{Remark}
Assumptions \ref{ASS_2}-\ref{ASS_2_1} are adoptions of Assumptions 4.3-4.4 in \cite{Frihauf12}.  By Theorem 2 in \cite{RaltiffAAC13}, the Nash equilibria that satisfy Assumptions \ref{ASS_2}-\ref{ASS_2_1} are isolated. The objective of this paper is to design a Nash equilibrium seeking strategy that can be utilized by the players to learn the Nash equilibrium of the non-cooperative games. The characterizations on the existence, uniqueness and isolation issues of the Nash equilibria are beyond the scope of the paper. The readers are referred to \cite{RaltiffAAC13,RosenEco65} for the related explorations.
\end{Remark}

For notational convenience, let $\text{diag}\{p_{ij}\},p_{ij}\in R, i,j\in \mathcal{N}$ be a diagonal matrix whose diagonal elements are $p_{11},p_{12},\cdots,p_{1N},p_{21},\cdots,p_{NN},$ successively. Similarly, $\text{diag}\{p_i\},p_i\in R, i\in \mathcal{N}$ denotes an $N\times N$ dimensional diagonal matrix whose $i$th diagonal element is $p_i$. Moreover, define $\frac{\partial G(\mathbf{x})}{\partial \mathbf{x}}=\left[\frac{\partial f_1(\mathbf{x})}{\partial x_1},\frac{\partial f_2(\mathbf{x})}{\partial x_2},\cdots,\frac{\partial f_N(\mathbf{x})}{\partial x_N}\right]^T,$ $\frac{\partial G}{\partial \mathbf{x}}(\mathbf{y})=\left[\frac{\partial f_1}{\partial x_1}(\mathbf{y}_1),\frac{\partial f_2}{\partial x_2}(\mathbf{y}_2),\cdots,\frac{\partial f_N}{\partial x_N}(\mathbf{y}_N)\right]^T,$ $\mathbf{y}=[\mathbf{y}_1^T,\mathbf{y}_2^T,\cdots,\mathbf{y}_N^T ]^T,$ $h_1(\mathbf{x})=-[a_{11}x_1,a_{12}x_2,\cdots,a_{1N}x_N,a_{21}x_1,a_{22}x_2,\cdots, a_{NN}x_N]^T,$ and $B_0=\text{diag}\{a_{ij}\}, i,j \in \mathcal{N}.$ Then, the concatenated form of \eqref{NE_ave_cons_1} is
\begin{equation}
\dot{\mathbf{y}}=-\left((L\otimes I_{N\times N}+B_0)\mathbf{y}+h_1(\mathbf{x})\right),
\end{equation}
where $L$ is the Laplacian matrix of the communication graph, $I_{N\times N}$ denotes an $N\times N$ dimensional identity matrix and $\otimes$ is the Kronecker  product. Note that $-(L\otimes I_{N\times N}+B_0)$ is Hurwitz as the communication graph is undirected and connected.

\begin{Theorem}\label{res_1}
Suppose that Assumptions \ref{ASS_comm}-\ref{ASS_func_1} hold, and the agents update their actions according to \eqref{cons_xx}-\eqref{NE_ave_cons_1}. Then, for each $\mathbf{x}^*$ that further satisfies Assumptions \ref{ASS_2}-\ref{ASS_2_1}, there exists a positive constant $\delta^*$ such that for each $\delta\in (0,\delta^*)$, $(\mathbf{x}^*,\mathbf{1}_N\otimes \mathbf{x}^*)$ is exponentially stable.
\end{Theorem}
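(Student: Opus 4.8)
The plan is to treat the closed loop \eqref{cons_xx}-\eqref{NE_ave_cons_1} as a singularly perturbed system in the small parameter $\delta$ and to establish exponential stability by the two-time-scale (Tikhonov-type) approach, verifying that both the boundary-layer and the reduced subsystems have exponentially stable equilibria. First I would pass to the error coordinate $\mathbf{z}=\mathbf{y}-\mathbf{1}_N\otimes\mathbf{x}$, which measures the deviation of the estimates from their quasi-steady state. Exploiting the identity $(L\otimes I_{N\times N}+B_0)(\mathbf{1}_N\otimes\mathbf{x})=-h_1(\mathbf{x})$ (which follows from $L\mathbf{1}_N=\mathbf{0}$ together with the definitions of $B_0$ and $h_1$), the estimate dynamics collapse to $\dot{\mathbf{z}}=-(L\otimes I_{N\times N}+B_0)\mathbf{z}-\delta\,\mathbf{1}_N\otimes\big(\bar K\,\tfrac{\partial G}{\partial\mathbf{x}}(\mathbf{y})\big)$, where $\bar K=\text{diag}\{\bar k_i\}$, while $\dot{\mathbf{x}}=\delta\,\bar K\,\tfrac{\partial G}{\partial\mathbf{x}}(\mathbf{y})$. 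A direct check then shows that $(\mathbf{x},\mathbf{z})=(\mathbf{x}^*,\mathbf{0})$ is an \emph{exact} equilibrium for every $\delta$: $\mathbf{z}=\mathbf{0}$ forces $\mathbf{y}_i=\mathbf{x}^*$, and the first-order condition $\tfrac{\partial f_i}{\partial x_i}(\mathbf{x}^*)=0$ of Assumption \ref{ASS_2} makes both right-hand sides vanish.

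For the boundary-layer subsystem I would freeze $\mathbf{x}$ and set $\delta=0$, giving the linear fast dynamics $\dot{\mathbf{z}}=-(L\otimes I_{N\times N}+B_0)\mathbf{z}$. Since $-(L\otimes I_{N\times N}+B_0)$ is Hurwitz (as already noted, because the graph is undirected and connected), the origin is globally exponentially stable, uniformly in $\mathbf{x}$, and a quadratic Lyapunov function $\mathbf{z}^T P_f\mathbf{z}$ with $P_f$ positive definite solving the associated Lyapunov equation is available.

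For the reduced subsystem I would set $\mathbf{z}=\mathbf{0}$ and rescale time by $\tau=\delta t$, obtaining the gradient-play flow $\tfrac{d\mathbf{x}}{d\tau}=\bar K\,\big[\tfrac{\partial f_1}{\partial x_1}(\mathbf{x}),\dots,\tfrac{\partial f_N}{\partial x_N}(\mathbf{x})\big]^T$, whose Jacobian at $\mathbf{x}^*$ is $\bar K B$ with $B$ the matrix of Assumption \ref{ASS_2_1}. The crucial algebraic step is that $\bar K B$ is Hurwitz: left-multiplication by the positive diagonal $\bar K$ preserves strict diagonal dominance and the signs of the diagonal entries, so by Assumptions \ref{ASS_2}-\ref{ASS_2_1} each diagonal entry $\bar k_i\tfrac{\partial^2 f_i}{\partial x_i^2}(\mathbf{x}^*)$ is negative while $|\bar k_i\tfrac{\partial^2 f_i}{\partial x_i^2}(\mathbf{x}^*)|>\sum_{j\neq i}|\bar k_i\tfrac{\partial^2 f_i}{\partial x_i\partial x_j}(\mathbf{x}^*)|$, and Gershgorin's disc theorem confines the spectrum of $\bar K B$ to the open left half-plane. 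Hence $\mathbf{x}^*$ is exponentially stable for the reduced flow, with a Lyapunov function $\tilde{\mathbf{x}}^T P_s\tilde{\mathbf{x}}$, $\tilde{\mathbf{x}}=\mathbf{x}-\mathbf{x}^*$ and $P_s$ positive definite.

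Finally, with both subsystems exponentially stable and the $f_i$ of class $\mathcal{C}^2$ (Assumption \ref{ASS_func_1}), I would form a composite Lyapunov function $V=(1-d)\,\tilde{\mathbf{x}}^T P_s\tilde{\mathbf{x}}+d\,\mathbf{z}^T P_f\mathbf{z}$ and show that $\dot V$ is negative definite for all sufficiently small $\delta$, which produces the desired $\delta^*$ and the exponential stability of $(\mathbf{x}^*,\mathbf{1}_N\otimes\mathbf{x}^*)$. \textbf{The main obstacle} is dominating the coupling between the slow and fast error dynamics---namely the $O(\delta)$ feedback term $-\delta\,\mathbf{1}_N\otimes\big(\bar K\,\tfrac{\partial G}{\partial\mathbf{x}}(\mathbf{y})\big)$ in $\dot{\mathbf{z}}$ and the dependence of $\tfrac{\partial G}{\partial\mathbf{x}}(\mathbf{y})$ on $\mathbf{z}$ through $\mathbf{y}=\mathbf{z}+\mathbf{1}_N\otimes\mathbf{x}$---by the negative-definite leading terms; bounding these cross-terms (using $\mathcal{C}^2$ smoothness and local Lipschitz estimates of the gradients near $\mathbf{x}^*$) is precisely what fixes the admissible range $(0,\delta^*)$ of the perturbation parameter.
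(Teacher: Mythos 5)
Your proposal is correct and follows essentially the same route as the paper's proof: the same error coordinate $\bar{\mathbf{y}}=\mathbf{y}-\mathbf{1}_N\otimes\mathbf{x}$, the same Gershgorin argument showing $\bar{\mathbf{k}}B$ is Hurwitz for the reduced gradient-play flow, and the same weighted composite Lyapunov function whose cross terms are dominated for sufficiently small $\delta$. The only cosmetic difference is that you take a quadratic Lyapunov function from the linearization for the slow subsystem while the paper invokes the converse Lyapunov theorem for the nonlinear auxiliary system; locally these are interchangeable.
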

\begin{Proof}
See Section \ref{res_1_p} for the proof.
\end{Proof}

\begin{Remark}
Theorem \ref{res_1} can be derived by utilizing singular perturbation analysis (see e.g., \cite{Frihauf12}\cite{KHAIL02}). The detailed Lyapunov stability analysis is conducted in the proof for the convenience of the subsequent non-local convergence analysis.  From the proof, it can be seen that the convergence result depends on the convergence of the players' actions to each isolated Nash equilibrium under the gradient play, i.e.,
\begin{equation}
\frac{dx_i}{dt}=\bar{k}_i\frac{\partial f_i}{\partial x_i}(\mathbf{x}),\forall i\in \mathcal{N},
\end{equation}
which is ensured if the matrix $\bar{\mathbf{k}}B,$ where $\bar{\mathbf{k}}=\text{diag}\{\bar{k}_i\},i\in \mathcal{N},$  is Hurwitz. Hence, Assumption \ref{ASS_2_1} is conservative to derive the result (see also \cite{Frihauf12} for the argument).

Note that Assumption \ref{ASS_2_1} is different from the condition provided in Proposition 2 of \cite{RaltiffAAC13}. In \cite{RaltiffAAC13}, the gradient play is governed by
\begin{equation}\label{gradie_p}
\frac{dx_i}{dt}=\frac{\partial f_i}{\partial x_i}(\mathbf{x}),\forall i\in \mathcal{N}.
\end{equation}
Linearizing \eqref{gradie_p} at the Nash equilibrium point, it can be derived that the Nash equilibrium is exponentially stable under \eqref{gradie_p} if $B$ is Hurwitz \cite{RaltiffAAC13}.
\end{Remark}

In Theorem \ref{res_1}, a local convergence result to each isolated Nash equilibrium that satisfies the given conditions is presented. In the following, non-local convergence results are investigated under stronger conditions.
\begin{Assumption}\label{Ass_4}
Each player's payoff function $f_i(x_i,\mathbf{x}_{-i})$ is concave in $x_i, \forall i\in\mathcal{N}$. Furthermore, for $\mathbf{x},\mathbf{z}\in R^N,$
\begin{equation}
(\mathbf{x}-\mathbf{z})^T\left(\frac{\partial G(\mathbf{x})}{\partial \mathbf{x}}-\frac{\partial G(\mathbf{z})}{\partial \mathbf{z}}\right)\leq -m||\mathbf{x}-\mathbf{z}||^2,
\end{equation}
where $m>0$ is a constant.
\end{Assumption}
\begin{Remark}
By this assumption, it can be derived that the Nash equilibrium of the game is unique. In addition, the stationary condition $\frac{\partial G(\mathbf{x})}{\partial \mathbf{x}}=\mathbf{0}_N$, where $\mathbf{0}_N$ denotes an $N$ dimensional column vector composed of $0,$ is a sufficient and necessary condition for $\mathbf{x}=\mathbf{x}^*,$ which indicates that
\begin{equation}
(\mathbf{x}-\mathbf{x}^*)^T\frac{\partial G(\mathbf{x})}{\partial \mathbf{x}}\leq -m||\mathbf{x}-\mathbf{x}^*||^2,
\end{equation}
for $\mathbf{x}\in R^N.$ Similar to \cite{RosenEco65}, the uniqueness of the equilibrium point can be derived by supposing that
\begin{equation}\label{rosen_c}
(\mathbf{x}-\mathbf{z})^T\left(\frac{\partial G(\mathbf{x})}{\partial \mathbf{x}}-\frac{\partial G(\mathbf{z})}{\partial \mathbf{z}}\right)<0,
\end{equation}
for all distinct $\mathbf{x},\mathbf{z}$ in the considered domain. Assumption \ref{Ass_4} is slightly stronger than \eqref{rosen_c} to derive exponential stability of the Nash equilibrium under the proposed seeking strategy.
\end{Remark}
\begin{Theorem}\label{corol_2}
Suppose that Assumptions \ref{ASS_comm}, \ref{ASS_func_1} and \ref{Ass_4} are satisfied and the players update their actions according to \eqref{cons_xx}-\eqref{NE_ave_cons_1}. Then, for each positive constant $\Delta$, there is a positive constant $\delta^*(\Delta)$ such that for each $\delta\in (0,\delta^*(\Delta))$,
 $(\mathbf{x},\mathbf{y})$ converges exponentially to $(\mathbf{x}^*,\mathbf{1}_N\otimes \mathbf{x}^*)$ for every $||[(\mathbf{x}(0)-\mathbf{x}^*)^T,(\mathbf{y}(0)-\mathbf{1}_N\otimes \mathbf{x}^*)^T]^T||\leq \Delta.$
\end{Theorem}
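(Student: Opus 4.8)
The plan is to read \eqref{cons_xx}--\eqref{NE_ave_cons_1} as a singularly perturbed system, exactly as in Theorem \ref{res_1}, but to exploit the \emph{global} monotonicity of Assumption \ref{Ass_4} in the slow Lyapunov estimate so that the domain of attraction can be enlarged to any prescribed ball. First I would pass to the error coordinates $\mathbf{e}=\mathbf{x}-\mathbf{x}^*$ and $\mathbf{w}=\mathbf{y}-\mathbf{1}_N\otimes\mathbf{x}$, noting that $\mathbf{1}_N\otimes\mathbf{x}$ is precisely the quasi-steady-state manifold ($y_{ij}^q=x_j$) identified in the insight paragraph. Writing $A=-(L\otimes I_{N\times N}+B_0)$, which is Hurwitz, and using the identity $A(\mathbf{1}_N\otimes\mathbf{x})=h_1(\mathbf{x})$, the estimate dynamics collapse to $\dot{\mathbf{w}}=A\mathbf{w}-\delta\,\mathbf{1}_N\otimes\!\big(\bar{\mathbf{k}}\tfrac{\partial G}{\partial\mathbf{x}}(\mathbf{y})\big)$, so the interconnection entering the fast subsystem is $O(\delta)$, while the slow subsystem is $\dot{\mathbf{e}}=\delta\bar{\mathbf{k}}\tfrac{\partial G}{\partial\mathbf{x}}(\mathbf{y})$.

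For the reduced (slow) dynamics I would use the \emph{weighted} Lyapunov function $V(\mathbf{e})=\tfrac12\mathbf{e}^T\bar{\mathbf{k}}^{-1}\mathbf{e}$, the weight being chosen so that $\bar{\mathbf{k}}^{-1}$ cancels the gain matrix: this gives $\dot V=\delta\,\mathbf{e}^T\tfrac{\partial G}{\partial\mathbf{x}}(\mathbf{y})$, and after writing $\tfrac{\partial G}{\partial\mathbf{x}}(\mathbf{y})=\tfrac{\partial G(\mathbf{x})}{\partial\mathbf{x}}+\big[\tfrac{\partial G}{\partial\mathbf{x}}(\mathbf{y})-\tfrac{\partial G(\mathbf{x})}{\partial\mathbf{x}}\big]$ and invoking the bound $\mathbf{e}^T\tfrac{\partial G(\mathbf{x})}{\partial\mathbf{x}}\le-m\|\mathbf{e}\|^2$ from the Remark after Assumption \ref{Ass_4}, it yields $\dot V\le\delta(-m\|\mathbf{e}\|^2+\ell\|\mathbf{e}\|\,\|\mathbf{w}\|)$, where $\ell$ is a Lipschitz constant of the gradients $\tfrac{\partial f_i}{\partial x_i}$ and the mismatch is controlled because $y_{ij}-x_j$ is a component of $\mathbf{w}$. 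For the boundary-layer system $\dot{\mathbf{w}}=A\mathbf{w}$ I would take $W(\mathbf{w})=\mathbf{w}^TP\mathbf{w}$ with $P$ the positive definite solution of $A^TP+PA=-Q$ ($Q$ positive definite), which exists since $A$ is Hurwitz, giving $\dot W\big|_{\mathrm{bl}}=-\mathbf{w}^TQ\mathbf{w}$.

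I would then differentiate the composite function $\nu=V+W$ along the full flow. Besides the mismatch term above, the $O(\delta)$ interconnection in $\dot{\mathbf{w}}$ contributes a cross term bounded using $\|\tfrac{\partial G}{\partial\mathbf{x}}(\mathbf{y})\|\le\ell(\|\mathbf{w}\|+\sqrt N\|\mathbf{e}\|)$, valid because $\tfrac{\partial G}{\partial\mathbf{x}}(\mathbf{1}_N\otimes\mathbf{x}^*)=\mathbf{0}_N$. Collecting terms in $\xi=[\|\mathbf{e}\|,\|\mathbf{w}\|]^T$ gives $\dot\nu\le-\xi^TM\xi$ with
\[
M=\begin{bmatrix}\delta m & -\tfrac{\delta(\ell+c_1)}{2}\\ -\tfrac{\delta(\ell+c_1)}{2} & \lambda_{\min}(Q)-\delta c_1\end{bmatrix},
\]
where $c_1$ is a constant depending on $\ell,\|P\|,\bar{k}_{\max},N$. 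Positive definiteness of $M$ reduces to $m\,\lambda_{\min}(Q)>\delta\big(mc_1+\tfrac14(\ell+c_1)^2\big)$, so that for $\delta$ below an explicit threshold one gets $\dot\nu\le-\alpha\nu$, i.e. exponential convergence of $(\mathbf{x},\mathbf{y})$ to $(\mathbf{x}^*,\mathbf{1}_N\otimes\mathbf{x}^*)$.

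The step I expect to be the main obstacle is the semi-global bookkeeping that makes the threshold depend on $\Delta$. Although Assumption \ref{Ass_4} supplies the monotonicity bound globally, the constants $\ell$ and $c_1$ are Lipschitz constants of the gradients, which under the mere $\mathcal{C}^2$ regularity of Assumption \ref{ASS_func_1} are finite only on compact sets. I would therefore fix $\Delta$, choose a sublevel set of $\nu$ containing the initial ball $\{\|[\mathbf{e}^T,\mathbf{w}^T]^T\|\le\Delta\}$, evaluate $\ell$ and $c_1$ on the associated compact set, and then use $\dot\nu\le-\alpha\nu$ to establish forward invariance of that sublevel set, so the bounds remain valid along the entire trajectory; the resulting threshold is the advertised $\delta^*(\Delta)$. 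Closing this loop consistently --- the constants fix the sublevel set, which must in turn contain the compact set on which the constants were computed --- is the delicate part, and it is exactly what distinguishes this non-local result from the purely local Theorem \ref{res_1}.
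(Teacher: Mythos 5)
Your proposal follows essentially the same route as the paper's proof: the paper also uses the composite Lyapunov function $V=\tfrac{c}{2}(\mathbf{x}-\mathbf{x}^*)^T\bar{\mathbf{k}}^{-1}(\mathbf{x}-\mathbf{x}^*)+(1-c)\bar{\mathbf{y}}^TP_1\bar{\mathbf{y}}$ with $\bar{\mathbf{y}}=\mathbf{y}-\mathbf{y}^q$ (your $\mathbf{w}$), applies the monotonicity bound of Assumption \ref{Ass_4} to the slow term, bounds the mismatch and the $O(\delta)$ interconnection via Lipschitz constants valid on a compact set, and concludes by positive definiteness of a $2\times 2$ matrix for $\delta$ small. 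Your explicit treatment of the $\Delta$-dependent sublevel-set invariance is in fact more detailed than the paper's, which simply restricts to "any $(\mathbf{x},\mathbf{y})$ in a compact set" and defers the rest to the proof of Theorem \ref{res_1}.
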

\begin{proof}
See Section \ref{corol_2_p} for the proof.
\end{proof}

\begin{Corollary}\label{corol_2_add}
Suppose that Assumptions \ref{ASS_comm}, \ref{ASS_func_1} and \ref{Ass_4} are satisfied, the players update their actions according to \eqref{cons_xx}-\eqref{NE_ave_cons_1} and the functions $\frac{\partial f_i(\mathbf{x})}{\partial x_i},\forall i\in \mathcal{N}$ are globally Lipschitz. Then, there is a positive constant $\delta^*$ such that for each $\delta\in (0,\delta^*)$,
$(\mathbf{x}^*,\mathbf{1}_N\otimes \mathbf{x}^*)$  is globally exponentially stable.
\end{Corollary}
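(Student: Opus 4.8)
The plan is to treat \eqref{cons_xx}--\eqref{NE_ave_cons_1} as a singularly perturbed system and to mimic the composite-Lyapunov argument behind Theorem~\ref{corol_2}, the decisive new ingredient being that the global Lipschitz hypothesis makes every constant in the cross-term bounds independent of the initial state. First I would move to the slow scale $\tau=\delta t$ and pass to the error coordinates $\tilde{\mathbf{x}}=\mathbf{x}-\mathbf{x}^*$ and the boundary-layer variable $\mathbf{w}=\mathbf{y}-\mathbf{1}_N\otimes\mathbf{x}$, so that the target equilibrium $(\mathbf{x}^*,\mathbf{1}_N\otimes\mathbf{x}^*)$ becomes the origin. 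Since $\mathbf{1}_N\otimes\mathbf{x}$ is the quasi-steady state, i.e. $(L\otimes I_{N\times N}+B_0)(\mathbf{1}_N\otimes\mathbf{x})+h_1(\mathbf{x})=\mathbf{0}$, the dynamics become $\frac{d\tilde{\mathbf{x}}}{d\tau}=\bar{\mathbf{k}}\frac{\partial G}{\partial\mathbf{x}}(\mathbf{y})$ and $\frac{d\mathbf{w}}{d\tau}=-\frac{1}{\delta}(L\otimes I_{N\times N}+B_0)\mathbf{w}-\mathbf{1}_N\otimes(\bar{\mathbf{k}}\frac{\partial G}{\partial\mathbf{x}}(\mathbf{y}))$.

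Next I would use $V=V_1+V_2$ with $V_1=\frac{1}{2}\tilde{\mathbf{x}}^T\bar{\mathbf{k}}^{-1}\tilde{\mathbf{x}}$ and $V_2=\frac{1}{2}\mathbf{w}^T P\mathbf{w}$, where $P\succ0$ solves $(L\otimes I_{N\times N}+B_0)^T P+P(L\otimes I_{N\times N}+B_0)=Q\succ0$; such a $P$ exists because $-(L\otimes I_{N\times N}+B_0)$ is Hurwitz. The weight $\bar{\mathbf{k}}^{-1}$ makes the slow part equal $\tilde{\mathbf{x}}^T\frac{\partial G}{\partial\mathbf{x}}(\mathbf{y})$, which I would split as $\tilde{\mathbf{x}}^T\frac{\partial G(\mathbf{x})}{\partial\mathbf{x}}+\tilde{\mathbf{x}}^T(\frac{\partial G}{\partial\mathbf{x}}(\mathbf{y})-\frac{\partial G(\mathbf{x})}{\partial\mathbf{x}})$; the first summand is $\le-m\|\tilde{\mathbf{x}}\|^2$ by Assumption~\ref{Ass_4}, which holds on all of $R^N$. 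The global Lipschitz property then supplies two \emph{global} linear bounds: $\|\frac{\partial G}{\partial\mathbf{x}}(\mathbf{y})-\frac{\partial G(\mathbf{x})}{\partial\mathbf{x}}\|\le L_G\|\mathbf{w}\|$ for the cross term above, and (using $\frac{\partial G(\mathbf{x}^*)}{\partial\mathbf{x}}=\mathbf{0}$) $\|\frac{\partial G(\mathbf{x})}{\partial\mathbf{x}}\|\le L_G'\|\tilde{\mathbf{x}}\|$, which I need in order to bound the coupling term $-\mathbf{w}^T P(\mathbf{1}_N\otimes\bar{\mathbf{k}}\frac{\partial G}{\partial\mathbf{x}}(\mathbf{y}))$ appearing in $\frac{dV_2}{d\tau}$.

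Collecting the estimates, I expect $\dot V\le-m\|\tilde{\mathbf{x}}\|^2+(L_G+c_1)\|\tilde{\mathbf{x}}\|\,\|\mathbf{w}\|-(\frac{\lambda_{\min}(Q)}{2\delta}-c_2)\|\mathbf{w}\|^2$, a quadratic form in $(\|\tilde{\mathbf{x}}\|,\|\mathbf{w}\|)$ whose coefficient matrix is positive definite as soon as $\frac{\lambda_{\min}(Q)}{2\delta}>c_2+\frac{(L_G+c_1)^2}{4m}$. This inequality defines a single threshold $\delta^*=\lambda_{\min}(Q)/(2c_2+\frac{(L_G+c_1)^2}{2m})$, and for every $\delta\in(0,\delta^*)$ I obtain $\dot V\le-\alpha(\|\tilde{\mathbf{x}}\|^2+\|\mathbf{w}\|^2)$ with $\alpha>0$. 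Since $V$ is quadratic and positive definite, $\dot V\le-(\alpha/\bar c)V$, so $V$ and hence $(\tilde{\mathbf{x}},\mathbf{w})$ decay exponentially from \emph{every} initial condition; converting $\tau$ back to $t$ then yields global exponential stability of $(\mathbf{x}^*,\mathbf{1}_N\otimes\mathbf{x}^*)$.

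The main obstacle, and the only substantive departure from Theorem~\ref{corol_2}, is guaranteeing that $m,L_G,L_G',c_1,c_2$ and $\lambda_{\min}(Q)$ are genuinely \emph{global} constants, so that $\delta^*$ does not collapse to zero as the region of interest enlarges. In the semi-global theorem one controls $\frac{\partial G}{\partial\mathbf{x}}$ only by a Lipschitz constant valid on the ball of radius $\Delta$, which forces $\delta^*(\Delta)\to0$; the global Lipschitz assumption is exactly what removes this dependence and lets the negative-definiteness condition hold uniformly on $R^{N+N^2}$. Establishing those uniform linear coupling bounds is the crux, after which the conclusion follows from the standard composite-Lyapunov singular-perturbation estimate.
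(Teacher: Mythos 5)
Your proposal is correct and follows essentially the same route as the paper: the paper proves this corollary by re-running the composite Lyapunov argument of Theorem~\ref{corol_2} (with $V$ a weighted sum of $(\mathbf{x}-\mathbf{x}^*)^T\bar{\mathbf{k}}^{-1}(\mathbf{x}-\mathbf{x}^*)$ and $\bar{\mathbf{y}}^TP_1\bar{\mathbf{y}}$, where $\bar{\mathbf{y}}=\mathbf{y}-\mathbf{1}_N\otimes\mathbf{x}$ is exactly your $\mathbf{w}$) and observing that the global Lipschitz hypothesis makes the bounds \eqref{comp_set_1} and \eqref{comp_set} hold on all of $R^{N+N^2}$, which is precisely the ``uniform constants'' point you identify as the crux. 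Your only departures --- working in the $\tau=\delta t$ time scale and writing an explicit formula for $\delta^*$ --- are cosmetic.
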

\begin{proof}
See Section \ref{corol_2_add_p} for the proof.
\end{proof}
\begin{Remark}
A typical example that satisfies Assumption \ref{Ass_4} is a potential game\footnote{Given that the players' payoff functions are continuously differentiable, the game is a \emph{potential game} if there exists a function $F(\mathbf{x})$ such that
\begin{equation}\label{pote_ga_de}
\frac{\partial f_i(x_i,\mathbf{x}_{-i})}{\partial x_i}=\frac{\partial F(x_i,\mathbf{x}_{-i})}{\partial x_i},
\end{equation}
$\forall i \in \mathcal{N}.$ Furthermore, the function $F$ is the potential function \cite{YETC15}.} in which the potential function is strongly concave\footnote{ A differentiable function $f$ is \emph{strongly convex} if the following inequality holds for all points $x,y$ in its domain:
\begin{equation}
(x-y)^T\left(\frac{\partial f(x)}{\partial x}-\frac{\partial f(y)}{\partial y}\right)\geq m||x-y||^2,
\end{equation}
for some parameter $m>0.$ A function $f$ is \emph{strongly concave} if $-f$ is strongly convex \cite{Bertsekas}.
}.
\end{Remark}
%\begin{Remark}
%{\color{blue}Note that Theorem \ref{corol_2} presents a non-local convergence result for games with non-quadratic payoffs that are not necessarily potential games as in Theorem \ref{poten_re} and Corollary \ref{coro_1}. In addition, the result in Corollary \ref{coro_1} for potential games can be adapted for general non-quadratic games, which is omitted here.}
%\end{Remark}

\subsection{Quadratic Games}
In this section, quadratic games are considered. Suppose that player $i$'s payoff function is
\begin{equation}
f_i(\mathbf{x})=\frac{1}{2}\sum_{j=1}^{N}\sum_{k=1}^{N}h^i_{jk}x_jx_k+\sum_{j=1}^{N} v^i_jx_j+g_i,
\end{equation}
where $h_{jk}^i,$ $v^i_j$, $g_i$ are the coefficients of the quadratic terms,  monomial terms and constant terms, respectively. Furthermore, $h_{ii}^i<0, h_{jk}^i=h_{kj}^i, \forall i,j,k\in \mathcal{N}$.

\begin{Assumption}\label{Assum_5}
The matrix
\begin{equation*}
H=\left[
  \begin{array}{cccc}
    h_{11}^1 & h_{12}^1 & \cdots & h_{1N}^1 \\
    h_{21}^2 & h_{22}^2 & \cdots & h_{2N}^2 \\
    \vdots &  & \ddots &  \\
    h_{N1}^N & h_{N2}^N & \cdots & h_{NN}^N \\
  \end{array}
\right],
\end{equation*}
is strictly diagonally dominant, i.e., $|h_{ii}^i|>\sum_{j=1,j\neq i}^N|h_{ij}^i|,\forall i\in \mathcal{N}$.
\end{Assumption}

\begin{Remark}
By Assumption \ref{Assum_5}, the Nash equilibrium of the quadratic games exists and is unique. Moreover, the unique Nash equilibrium is given by $\mathbf{x}^*=-H^{-1}v,$ where $v=[v_1^1,v_2^2,\cdots,v_N^N]^T$ \cite{Frihauf12,Basar99}.
\end{Remark}

\begin{Corollary}\label{qua_a}
Suppose that Assumptions \ref{ASS_comm}, \ref{ASS_func_1} and \ref{Assum_5} hold, and the agents update their actions according to \eqref{cons_xx}-\eqref{NE_ave_cons_1}. Then, there exists a positive constant $\delta^*$ such that for each $\delta\in (0,\delta^*)$, $(\mathbf{x}^*,\mathbf{1}_N\otimes \mathbf{x}^*)$ is globally exponentially stable.
\end{Corollary}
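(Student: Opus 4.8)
The plan is to deduce Corollary~\ref{qua_a} from Theorem~\ref{res_1} by observing that for quadratic payoffs the closed-loop system is linear time-invariant, so that the local exponential stability furnished by Theorem~\ref{res_1} automatically globalizes. The starting point is the structural consequence of quadraticity: differentiating $f_i$ gives
\begin{equation*}
\frac{\partial f_i}{\partial x_i}(\mathbf{y}_i)=\sum_{k=1}^N h^i_{ik}y_{ik}+v^i_i,
\end{equation*}
which is affine in the estimates, and the second partials $\frac{\partial^2 f_i}{\partial x_i\partial x_j}=h^i_{ij}$ are constants independent of the evaluation point. Hence both \eqref{cons_xx} and \eqref{NE_ave_cons_1} are affine in $(\mathbf{x},\mathbf{y})$, and in the error coordinates $\tilde{\mathbf{x}}=\mathbf{x}-\mathbf{x}^*$, $\tilde{\mathbf{y}}=\mathbf{y}-\mathbf{1}_N\otimes\mathbf{x}^*$ the constant terms cancel (using $\mathbf{x}^*=-H^{-1}v$), leaving a homogeneous linear system $[\dot{\tilde{\mathbf{x}}}^T,\dot{\tilde{\mathbf{y}}}^T]^T=A(\delta)[\tilde{\mathbf{x}}^T,\tilde{\mathbf{y}}^T]^T$ for some $\delta$-dependent matrix $A(\delta)$.

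Next I would check that Assumption~\ref{Assum_5} delivers precisely the hypotheses required to invoke Theorem~\ref{res_1}. The unique Nash equilibrium $\mathbf{x}^*=-H^{-1}v$ exists by the remark following Assumption~\ref{Assum_5}, and there $\frac{\partial f_i}{\partial x_i}(\mathbf{x}^*)=0$ while $\frac{\partial^2 f_i}{\partial x_i^2}(\mathbf{x}^*)=h^i_{ii}<0$, so Assumption~\ref{ASS_2} holds. Since the second partials are constant, the matrix $B$ of Assumption~\ref{ASS_2_1} coincides with $H$, which is strictly diagonally dominant by Assumption~\ref{Assum_5}; thus Assumption~\ref{ASS_2_1} holds as well. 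Theorem~\ref{res_1} then produces a $\delta^*>0$ such that for each $\delta\in(0,\delta^*)$ the equilibrium $(\mathbf{x}^*,\mathbf{1}_N\otimes\mathbf{x}^*)$ is exponentially stable, which for the linear system above is exactly the statement that $A(\delta)$ is Hurwitz.

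The globalization is then immediate: for a homogeneous LTI system a Hurwitz system matrix yields a bound $\|[\tilde{\mathbf{x}}(t)^T,\tilde{\mathbf{y}}(t)^T]^T\|\le c\,e^{-\lambda t}\|[\tilde{\mathbf{x}}(0)^T,\tilde{\mathbf{y}}(0)^T]^T\|$ with $c,\lambda>0$ independent of the initial condition, which is global exponential stability of $(\mathbf{x}^*,\mathbf{1}_N\otimes\mathbf{x}^*)$. The only point that needs genuine care is the bookkeeping of the previous step---confirming that the matrix $B$ built from the Hessian diagonals in Assumption~\ref{ASS_2_1} really is $H$, and that the exponential-stability conclusion of Theorem~\ref{res_1} (obtained there by a Lyapunov/singular-perturbation argument) is, in the linear setting, genuinely a Hurwitz statement about $A(\delta)$ and not merely a localized estimate. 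Once linearity is in hand there is no gap between the local and global claims, so I do not anticipate any hard analytic obstacle beyond this verification.
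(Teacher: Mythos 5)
Your proposal is correct and follows essentially the same route as the paper: verify that Assumption \ref{Assum_5} implies Assumptions \ref{ASS_2}--\ref{ASS_2_1} (since $B=H$ for quadratic payoffs), invoke Theorem \ref{res_1} for local exponential stability, and then globalize via the linearity of the closed-loop system \eqref{cons_xx}--\eqref{NE_ave_cons_1}. You merely spell out the bookkeeping (the affine structure, the cancellation of constants at $\mathbf{x}^*=-H^{-1}v$, and the Hurwitz reading of the local result) that the paper leaves implicit.
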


\begin{Proof}
See Section \ref{qua_a_p} for the proof.
\end{Proof}

\begin{Corollary}\label{qua_ca}
Suppose that Assumptions \ref{ASS_comm}-\ref{ASS_func_1} are satisfied, matrix $H$ is Hurwitz and the agents update their actions according to \eqref{cons_xx}-\eqref{NE_ave_cons_1}. Then, there exists a positive constant $\delta^*$ such that for each $\delta\in (0,\delta^*)$, the Nash equilibrium is globally exponentially stable given that the quadratic game is a potential game.
\end{Corollary}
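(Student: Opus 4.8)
The plan is to exploit that for quadratic payoffs the whole closed loop \eqref{cons_xx}--\eqref{NE_ave_cons_1} is linear and time-invariant, so that global exponential stability of $(\mathbf{x}^*,\mathbf{1}_N\otimes\mathbf{x}^*)$ is equivalent to Hurwitzness of the composite system matrix. Since the only role played by Assumption \ref{Assum_5} in the proof of Corollary \ref{qua_a} is to guarantee that the reduced (slow) matrix $\bar{\mathbf{k}}H$ is Hurwitz, I would reduce the present statement to re-deriving this single fact under the new hypotheses and then reuse the singular-perturbation argument of Corollary \ref{qua_a} unchanged. Writing the gradient of the quadratic payoff as $\frac{\partial G(\mathbf{x})}{\partial\mathbf{x}}=H\mathbf{x}+v$ gives $H\mathbf{x}^*+v=\mathbf{0}_N$ and $\frac{\partial G(\mathbf{x})}{\partial\mathbf{x}}=H(\mathbf{x}-\mathbf{x}^*)$, which fixes the error dynamics for later use.

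The crux, and the step I expect to be the main obstacle, is establishing that $\bar{\mathbf{k}}H$ is Hurwitz without diagonal dominance. First I would use the potential-game hypothesis: differentiating the identity \eqref{pote_ga_de} with respect to $x_j$ yields $h^i_{ij}=\frac{\partial^2 F}{\partial x_i\partial x_j}=\frac{\partial^2 F}{\partial x_j\partial x_i}=h^j_{ji}$, so $H$ is symmetric; being also Hurwitz, $H$ has real negative eigenvalues and is therefore negative definite. Then, choosing $P=\bar{\mathbf{k}}^{-1}$, which is symmetric positive definite because each $\bar{k}_i>0$, I would check the Lyapunov equation
\begin{equation*}
(\bar{\mathbf{k}}H)^TP+P(\bar{\mathbf{k}}H)=H^T+H=2H,
\end{equation*}
whose right-hand side is negative definite; by the Lyapunov theorem $\bar{\mathbf{k}}H$ is Hurwitz. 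This is precisely where the potential-game structure substitutes for the strict diagonal dominance of Assumption \ref{Assum_5}.

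With $\bar{\mathbf{k}}H$ Hurwitz and $-(L\otimes I_{N\times N}+B_0)$ Hurwitz (already noted in the excerpt since the graph is undirected and connected), the remainder follows as in Corollary \ref{qua_a}: passing to the error coordinates $\tilde{\mathbf{x}}=\mathbf{x}-\mathbf{x}^*$ and $\mathbf{w}=\mathbf{y}-\mathbf{1}_N\otimes\mathbf{x}$ puts the closed loop in the standard two-time-scale form with slow matrix $\bar{\mathbf{k}}H$ and boundary-layer matrix $-(L\otimes I_{N\times N}+B_0)$, and a singular-perturbation analysis produces a constant $\delta^*$ such that the composite matrix is Hurwitz for every $\delta\in(0,\delta^*)$. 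Because the system is linear and time-invariant, Hurwitzness of the composite matrix is equivalent to global exponential stability of $(\mathbf{x}^*,\mathbf{1}_N\otimes\mathbf{x}^*)$, which completes the argument.
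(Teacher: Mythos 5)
Your proposal is correct and follows essentially the same route as the paper: both use the potential-game structure to conclude that $H$ is symmetric (hence negative definite when Hurwitz), then establish that $\bar{\mathbf{k}}H$ is Hurwitz, and finally invoke linearity of the closed loop together with the singular-perturbation argument of the earlier results to upgrade to global exponential stability. The only (immaterial) difference is in how Hurwitzness of $\bar{\mathbf{k}}H$ is verified — you solve the Lyapunov equation with $P=\bar{\mathbf{k}}^{-1}$, whereas the paper notes that $\bar{\mathbf{k}}H$ is similar to the congruence $\sqrt{\bar{\mathbf{k}}}H\sqrt{\bar{\mathbf{k}}}$ of the negative definite matrix $H$; both arguments are standard and equivalent.
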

\begin{Proof}
See Section \ref{qua_ca_p} for the proof.
\end{Proof}

\begin{Remark}\label{remar_gain}
The presented results still hold if the estimation dynamics in \eqref{NE_ave_cons_1} is changed to
\begin{equation}
\dot{y}_{ij}=-m_{ij}\left(\sum_{k=1}^{N}a_{ik}(y_{ij}-y_{kj})+a_{ij}(y_{ij}-x_j)\right),
\end{equation}
$i,j\in \mathcal{N},$ where $m_{ij}$ is a fixed positive constant as the matrix $-\mathbf{m}(L\otimes I_{N\times N}+B_0),$ where $\mathbf{m}=\text{diag}\{m_{ij}\},i,j\in \mathcal{N},$ is Hurwitz.
\end{Remark}

\begin{Remark}
The proposed seeking strategy can be leveraged to seek for the Nash equilibrium for games where each player's payoff function depends on its own action and all the other players' actions. However, for aggregative games in which each player's payoff function depends on its own action and an aggregate of the all the player's actions, it is not necessary for each player to estimate all the players' actions. Instead, a dynamic average consensus protocol can be utilized to estimate the aggregate of the players' actions to reduce the computation cost (see e.g., \cite{YE15}). Moreover, for games where the players' payoff functions depend on a subset of the players' actions, an interference graph can be introduced to describe the interactions among the players (see e.g.,  \cite{Tekin12}). To further reduce the computation cost, the adaptation of the proposed seeking strategy for games on interference graph will be considered in future work.
\end{Remark}

\section{Numerical Examples} \label{numer_example}
In this section, three games with a network of $5$ players are considered. The communication graph for the players in the following examples is depicted in Fig. \ref{commu_graph_NE}.
\begin{figure}
\begin{center}
\scalebox{0.35}{\includegraphics[12,508][284,788]{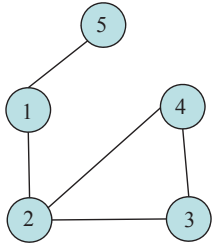}}
\caption{Communication graph for the players in the numerical examples.}\label{commu_graph_NE}
\end{center}
\end{figure}

\subsection{ Non-quadratic games}

\begin{example}\label{exam_1}
The players' payoff functions are
$f_1(\mathbf{x})=-x_1^3+3x_1x_2, f_2(\mathbf{x})=-(-2x_1+4x_2+\frac{1}{2}x_4+x_5)^2+48x_2,f_3(\mathbf{x})=-(x_1+4x_3-x_4-x_5)^2,f_4(\mathbf{x})=-(2x_1+4x_3+8x_4-x_5)^2,f_5(\mathbf{x})=-(x_1+4x_3+8x_4+17x_5)^2,$
%\begin{equation}
%\begin{aligned}
%f_1(\mathbf{x})&=-x_1^3+3x_1x_2\\
%f_2(\mathbf{x})&=-(-2x_1+4x_2+\frac{1}{2}x_4+x_5)^2+48x_2\\
%f_3(\mathbf{x})&=-(x_1+4x_3-x_4-x_5)^2\\
%f_4(\mathbf{x})&=-(2x_1+4x_3+8x_4-x_5)^2\\
%f_5(\mathbf{x})&=-(x_1+4x_3+8x_4+17x_5)^2,
%\end{aligned}
%\end{equation}
for players 1-5, respectively.

%Restricting the action set to be $X=\{\mathbf{x}\in R^5| x_1\geq 0, x_2 \geq 0\}$
%%\footnote{The constraint on the action set can be directly accommodated by a projection as in \cite{Frihauf12}. Hence, it is not specifically considered here.}
%, the reaction curves (see \cite{Basar99} for the definition) associated with the non-quadratic game for players $1$-$5$, respectively, are
%\begin{equation}
%\begin{aligned}
%R_1(\mathbf{x}_{-1})&=\sqrt{x_2}\\
%R_2(\mathbf{x}_{-2})&=\frac{1}{4}(2x_1-\frac{1}{2}x_4-x_5+6)\\
%R_3(\mathbf{x}_{-3})&=\frac{1}{4}(-x_1+x_4+x_5)\\
%R_4(\mathbf{x}_{-4})&=\frac{1}{8}(-2x_1-4x_3+x_5)\\
%R_5(\mathbf{x}_{-5})&=-\frac{1}{17}(x_1+4x_3+8x_4),
%\end{aligned}
%\end{equation}
%where $\mathbf{x}_{-i}$ denotes the vector of all the players' actions except for the action of player $i.$
\end{example}

%By direct calculation, it can be derived that the reaction curves interact at
Letting $\frac{\partial f_i(\mathbf{x})}{\partial x_i}=0,\forall i\in \{1,2,\cdots,5\}$ gives $\mathbf{x}=[-1,1,\frac{19}{72},\frac{1}{9},-\frac{1}{18}]^T$ or $\mathbf{x}=[\frac{3}{2},\frac{9}{4},-\frac{19}{48},-\frac{1}{6},\frac{1}{12}]^T.$ However, as $\frac{\partial ^2 f_1(\mathbf{x})}{\partial x_1^2}>0$ for $x_1<0,$ the point $\mathbf{x}=[-1,1,\frac{19}{72},\frac{1}{9},-\frac{1}{18}]^T$ does not satisfy Assumptions \ref{ASS_2}-\ref{ASS_2_1}. From the other aspect, the matrix $B$ is strictly diagonally dominant at $\mathbf{x}=[\frac{3}{2},\frac{9}{4},-\frac{19}{48},-\frac{1}{6},\frac{1}{12}]^T,$ with its diagonal elements being negative. Hence, $\mathbf{x}=[\frac{3}{2},\frac{9}{4},-\frac{19}{48},-\frac{1}{6},\frac{1}{12}]^T$ satisfies the conditions in Theorem \ref{res_1} by which there is a $\delta^*>0$ such that for each $\delta\in (0,\delta^*)$, the Nash equilibrium is exponentially stable.

The players' actions generated by the seeking strategy in  \eqref{cons_xx}-\eqref{NE_ave_cons_1}\footnote{Note that in the simulations, the distributed control gain discussed in Remark \ref{remar_gain} is included in the seeking strategy.}are plotted in Fig. \ref{Playeraction}. The initial values for the variables are set as  $\mathbf{x}(0)=[1\ \ 2\ \ 0\ \  0\ \ 0]^T$ and $\mathbf{y}_i(0)=[1\ \ 2\ \ 0\ \ 0\ \ 0]^T, \forall i\in \{1,2,\cdots,5\}$, which are close to $\mathbf{x}^*$ as only local convergence to the Nash equilibrium is ensured by Theorem \ref{res_1}. From the simulation result, it can be seen that the players' actions generated by the proposed seeking strategy converge to the Nash equilibrium point under the given initial conditions, which verifies Theorem \ref{res_1}.

\begin{figure}
\begin{center}
\scalebox{0.6}{\includegraphics[0,0][397,285]{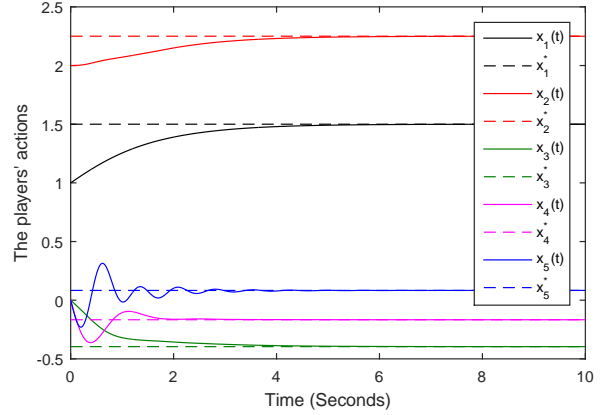}}
\caption{The plot of $x_{i}(t),$ $i\in \{1,2,3,4,5\}$ produced by \eqref{cons_xx}-\eqref{NE_ave_cons_1} in Example \ref{exam_1}.}\label{Playeraction}
\end{center}
\end{figure}

%\begin{figure}
%\begin{center}
%\scalebox{0.6}{\includegraphics[104,241][489,543]{playerestima.eps}}
%\caption{The plot of $y_{ij}(t),$ $i,j\in \{1,2,3,4,5\}$ produced by \eqref{cons_xx}-\eqref{NE_ave_cons_1}  in Example \ref{exam_1}.}\label{playerestima}
%\end{center}
%\end{figure}
%
%\begin{figure}
%\begin{center}
%\scalebox{1}{\includegraphics[90,4][320,307]{untitled.eps}}
%\caption{The plot of $y_{ij}(t),$ $i,j\in \{1,2,3,4,5\}$ produced by \eqref{cons_xx}-\eqref{NE_ave_cons_1}  in Example \ref{exam_1}.}\label{playerestima}
%\end{center}
%\end{figure}

%\begin{figure}
%\begin{minipage}[t]{0.5\linewidth}
%\centering
%\includegraphics[width=2in]{Playeraction.eps}
%\caption{fig1}
%\end{minipage}%
%\begin{minipage}[t]{0.5\linewidth}
%\centering
%\includegraphics[width=2in]{playerestima.eps}
%\caption{fig2}
%\end{minipage}
%\end{figure}

%\begin{figure}
%\begin{center}
%\scalebox{0.62}{\includegraphics[118,248][499,529]{playerreactcurve.eps}}
%\caption{The plot of the players' actions produced by \eqref{cons_xx}-\eqref{NE_ave_cons_1} and the associated reaction curves  in Example \ref{exam_1}.}\label{playerreactcurve}
%\end{center}
%\end{figure}

\begin{example}\label{exam_2}
The payoff function for player $i$ is
\begin{equation}
f_i(\mathbf{x})=m_if(\mathbf{x})+d_i,i\in\{1,2,\cdots,5\},
\end{equation}
where $m_1=1,m_2=5,m_3=2,m_4=3,m_5=2,d_i=0,\forall i\in \{1,2,\cdots,5\}$ in the simulation and
\begin{equation*}
\begin{aligned}
f(\mathbf{x})=&-\left(\frac{1}{12}x_1^4+5x_1^2+2x_1x_2+5x_2^2+x_2x_3\right.\\
&\left.+x_2x_5+\frac{5}{2}x_3^2+x_3x_4 +5x_4^2+2x_4x_5+3x_5^2\right).\\
\end{aligned}
\end{equation*}
The function $f(\mathbf{x})$ is at its maximum when $\mathbf{x}=\mathbf{0}_5$, which is also the Nash equilibrium of the game. Note that in this game, all the players' payoffs are maximized if $f(\mathbf{x})$ is maximized. Hence, any deviation from the Nash equilibrium reduces all the players' payoffs, indicating that adopting the proposed seeking strategy to seek for the Nash equilibrium would benefit all the players.
\end{example}

By direct calculation, it can be verified that in this game, Assumption \ref{Ass_4} is satisfied. Hence, for any bounded initial condition, there is a $\delta^*>0$ such that for each $\delta \in (0,\delta^*)$, the players' actions generated by the proposed method converge to the unique Nash equilibrium by Theorem \ref{corol_2}.
In the simulation, the initial values of the variables are set as $x_i(0)=20,y_{ij}(0)=20,\forall i,j\in \{1,2,\cdots,5\},$ which are far away from the equilibrium point. The players' actions generated by the proposed method in \eqref{cons_xx}-\eqref{NE_ave_cons_1} are shown in Fig. \ref{Playeraction_qua_2}. It can be seen from the simulation result that the players' actions generated by the proposed method  in \eqref{cons_xx}-\eqref{NE_ave_cons_1} converge to the Nash equilibrium though the initial values of the variables are far away from the equilibrium point, which verifies Theorem \ref{corol_2}.

\begin{figure}
\begin{center}
\scalebox{0.6}{\includegraphics[0,0][397,285]{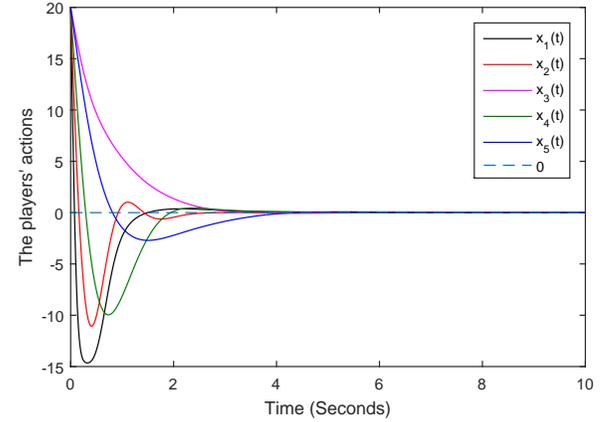}}
\caption{The plot of $x_{i}(t),$ $i\in \{1,2,3,4,5\}$ produced by \eqref{cons_xx}-\eqref{NE_ave_cons_1} in Example \ref{exam_2}.}\label{Playeraction_qua_2}
\end{center}
\end{figure}

%\begin{figure}
%\begin{center}
%\scalebox{0.65}{\includegraphics[2,2][398,302]{playerestima_nonqua_2.eps}}
%\caption{The plot of $y_{ij}(t),$ $i,j\in \{1,2,3,4,5\}$ produced by \eqref{cons_xx}-\eqref{NE_ave_cons_1}  in Example \ref{exam_2}.}\label{playerestima_qua_2}
%\end{center}
%\end{figure}

\subsection{A quadratic game}

\begin{example}\label{exam_3}
The payoff function of player $i$ is
\begin{equation*}
f_i(\mathbf{x})=-\rho_i(x_i-x_i^d)^2-(p_0\sum_{i=1}^N x_i+q_0)x_i,
\end{equation*}
for $i\in\{1,2,\cdots,5\},$ where $\rho_i>0,x_i^d$ for  $i\in\{1,2,\cdots,5\},$ $p_0>0$ and $q_0$  are constants.
The given example can be utilized to model the energy consumption game for heating ventilation and air conditioning systems (see, e.g., \cite{YE15} and the references therein).
\end{example}
%The reaction curve for player $i$ is
% \begin{equation*}
%l_i(\mathbf{x}_{-i})=\frac{2 \rho_ix_i^d-(p_0\sum_{j=1,j\neq i}^{5}x_j+q_0)}{2\rho_i+2p_0},
%\end{equation*}
%$\forall i\in\{1,2,\cdots,5\}.$
%\end{example}

In the simulation, the parameters are set as $\rho_i=1, $ for $i\in \{1,2,\cdots,5\},$ $p_0=0.1, q_0=10$ and $x_i^d$ for $i\in \{1,2,\cdots,5\}$ are set as $10, 15, 20, 25, 30,$ respectively.  By direct calculation, it can be derived that the unique Nash equilibrium is $\mathbf{x}^*=[ 2.0147, 6.7766, 11.5385,16.3004, 21.0623]^T.$ For this example, $H$ defined in Assumption \ref{Assum_5} is strictly diagonally dominant with all the diagonal elements being negative. Hence, the conditions in Corollary \ref{qua_a} are satisfied by which, it can be concluded that there is a $\delta^*>0$ such that for each $\delta\in (0,\delta^*),$ the equilibrium is globally exponentially stable under the given strategy. In the simulation, the initial conditions of  all the variables in \eqref{cons_xx}-\eqref{NE_ave_cons_1} are set as $-10$.  The players' actions generated by the proposed method in \eqref{cons_xx}-\eqref{NE_ave_cons_1} are plotted in Fig. \ref{Playeraction_qua}. From the simulation result, it can be seen that though the initial conditions are far away from the equilibrium, the players' actions still converge to the Nash equilibrium, which verifies Corollary \ref{qua_a}.

\begin{figure}
\begin{center}
\scalebox{0.6}{\includegraphics[0,0][397,275]{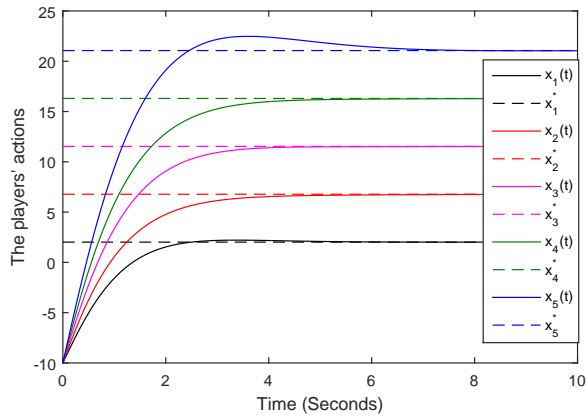}}
\caption{The plot of $x_{i}(t),$ $i\in \{1,2,3,4,5\}$ produced by \eqref{cons_xx}-\eqref{NE_ave_cons_1} in Example \ref{exam_3}.}\label{Playeraction_qua}
\end{center}
\end{figure}

%\begin{figure}
%\begin{center}
%\scalebox{0.6}{\includegraphics[104,241][489,543]{playerestima_qua.eps}}
%\caption{The plot of $y_{ij}(t),$ $i,j\in \{1,2,3,4,5\}$ produced by \eqref{cons_xx}-\eqref{NE_ave_cons_1}  in Example \ref{exam_3}.}\label{playerestima_qua}
%\end{center}
%\end{figure}

%\begin{figure}
%\begin{center}
%\scalebox{0.62}{\includegraphics[118,248][499,529]{playerreactcurve_qua.eps}}
%\caption{The plot of the players' actions produced by  \eqref{cons_xx}-\eqref{NE_ave_cons_1} and the associated reaction curves  in Example \ref{exam_3}.}\label{playerreactcurve_qua}
%\end{center}
%\end{figure}

%Fig. \ref{Playeraction_qua} plots the updates of the players' actions and Fig. \ref{playerestima_qua} shows the players' estimates on the players' actions. From the simulation results, it can be seen that the players' actions converge to the Nash equilibrium of the quadratic game.

\section{Conclusion} \label{concl_a}
Distributed Nash equilibrium seeking by neighboring communication for non-cooperative games among a network of players is studied in this paper. The players are supposed to be equipped with an undirected and connected communication graph. Based on a leader-following consensus protocol and the gradient play, a Nash equilibrium seeking algorithm is designed. For non-cooperative games, local convergence to the Nash equilibrium is firstly provided under mild conditions. Then, non-local convergence results are derived for the non-quadratic games under stronger conditions. For quadratic games, it is proven that under the proposed seeking strategy, the Nash equilibrium is globally exponentially stable under certain conditions.

\section{Appendix}
\subsection{Basics on Graph Theory}\label{gra}

For a graph defined as $\mathcal{G}=(\mathcal{V},\mathcal{E}),$ where $\mathcal{E}$ is the edge set satisfying
$\mathcal{E}\subseteq \mathcal{V}\times \mathcal{V}$ with $\mathcal{V}=\{1,2,\cdots,N\}$ being the set of nodes in the
network, it is undirected if for every $(i,j)\in \mathcal{E},$ $(j,i)\in
\mathcal{E}.$ Furthermore, $j$ is a neighbor of agent $i$ if $(j,i)\in \mathcal{E}.$ An undirected graph is connected if there is a path between any pair
of distinct vertices. The element on the $i$th row and $j$th column of the adjacency matrix $\mathcal{A}$ is defined as
$a_{ij}=1$ if node $j$ is connected with node $i,$ else, $a_{ij}=0.$ Moreover, we suppose that $a_{ii}=0$ in this paper.
The Laplacian matrix for the graph $L$ is defined as $L=\mathcal{D}-\mathcal{A} ,$ where $\mathcal{D}$ is a diagonal matrix whose $i$th diagonal entry is equal to the out
degree of node $i,$ represented by ${\displaystyle\sum\limits_{j=1}^{N}}a_{ij}$ \cite{HuSCL12}.

\subsection{Proof of Theorem \ref{res_1} }\label{res_1_p}
To facilitate the subsequent analysis, define an auxiliary system as
\begin{equation}\label{proof_aux_1}
\dot{x}_i=\bar{k}_i\frac{\partial f_i(\mathbf{x})}{\partial x_i}, i\in \mathcal{N}.
\end{equation}
Linearizing \eqref{proof_aux_1} at $\mathbf{x}^*$ gives
\begin{equation}
\frac{d\mathbf{x}}{dt}=\bar{\mathbf{k}}B (\mathbf{x}-\mathbf{x}^*),
\end{equation}
where $\bar{\mathbf{k}}=\text{diag}\{\bar{k}_i\}, i\in \mathcal{N}.$
Since $B$ is strictly diagonally dominant with all the diagonal elements being negative, $\bar{\mathbf{k}}B$ is Hurwitz by the Gershgorin Circle Theorem \cite{Horn85}. Hence, the equilibrium point is exponentially stable under \eqref{proof_aux_1}.  Therefore, there is a function $W_1:D_0\rightarrow R$, where $D_0=\{\mathbf{x} \in R^N| ||\mathbf{x}-\mathbf{x}^*||\leq r_0\}$ for some positive constant $r_0$ such that
\begin{equation}\label{proof_aux_2}
\begin{aligned}
&c_1||\mathbf{x}-\mathbf{x}^*||^2 \leq W_1(\mathbf{x})\leq c_2||\mathbf{x}-\mathbf{x}^*||^2\\
&\left(\frac{\partial W_1(\mathbf{x})}{\partial \mathbf{x}}\right)^T \left(\bar{\mathbf{k}} \frac{\partial G(\mathbf{x})}{\partial \mathbf{x}}\right)\leq -c_3 ||\mathbf{x}-\mathbf{x}^*||^2\\
&\left|\left| \frac{\partial W_1(\mathbf{x})}{\partial \mathbf{x}}\right|\right|\leq c_4 ||\mathbf{x}-\mathbf{x}^*||,
\end{aligned}
\end{equation}
for some positive constants $c_1,c_2,c_3,c_4$ by the Lyapunov Converse Theorem \cite{KHAIL02}.

Define
\begin{equation}\label{def_err}
\mathbf{\bar{y}}=\mathbf{y}-\mathbf{y}^q,
\end{equation}
where $\mathbf{y}^q=[y_{11}^q,y_{12}^q,\cdots,y_{1N}^q,y_{21}^q,\cdots,y_{2N}^q, \cdots, y_{NN}^q]^T$ and $y_{ij}^q=x_j,\forall i,j\in \mathcal{N}.$
Then,
\begin{equation}
\begin{aligned}
\dot{\bar{\mathbf{y}}}&=\dot{\mathbf{y}}-\dot{\mathbf{y}}^q\\
&=-((L\otimes I_{N\times N}+B_0)(\bar{\mathbf{y}}+\mathbf{y}^q)+h_1(\mathbf{x}))-\dot{\mathbf{y}}^q\\
&=-(L\otimes I_{N\times N}+B_0)\bar{\mathbf{y}}-\dot{\mathbf{y}}^q.
\end{aligned}
\end{equation}

Define a Lyapunov candidate function as
\begin{equation}
V=cW_1(\mathbf{x})+(1-c)\bar{\mathbf{y}}^TP_1\bar{\mathbf{y}},
\end{equation}
where $c\in (0,1)$ is a constant and $P_1$ is a symmetric positive definite matrix such that
\begin{equation}\label{def_p_1}
P_1(L\otimes I_{N\times N}+B_0)+(L\otimes I_{N\times N}+B_0)^TP_1=Q_1,
\end{equation}
where $Q_1$ is a symmetric positive definite matrix as $-(L\otimes I_{N\times N}+B_0)$ is Hurwitz by noticing that the communication graph is undirected and connected.

Define $\mathbf{z}=[(\mathbf{x}-\mathbf{x}^*)^T, \bar{\mathbf{y}}^T]^T.$ Then, there exists a domain $D_1=\{\mathbf{z}\in R^{N+N^2}| ||\mathbf{z}||\leq r_1\},$ for some positive constant $r_1$ such that the time derivative of the Lyapunov candidate function satisfies
\begin{equation}
\begin{aligned}
\dot{V}=&c\delta \left(\frac{\partial W_1(\mathbf{x})}{\partial \mathbf{x}}\right)^T \left(\bar{\mathbf{k}} \frac{\partial G(\mathbf{x})}{\partial \mathbf{x}}\right)\\
&+c\delta \left(\frac{\partial W_1(\mathbf{x})}{\partial \mathbf{x}}\right)^T \bar{\mathbf{k}} \left(\frac{\partial G}{\partial \mathbf{x}}(\mathbf{y})-\frac{\partial G}{\partial \mathbf{x}}(\mathbf{x})\right)\\
&+(1-c)(-(L\otimes I_{N\times N}+B_0)\bar{\mathbf{y}}-\dot{\mathbf{y}}^q)^TP_1\bar{\mathbf{y}}\\
&+(1-c)\bar{\mathbf{y}}^TP_1(-(L\otimes I_{N\times N}+B_0)\bar{\mathbf{y}}-\dot{\mathbf{y}}^q)\\
\leq& -c\delta c_3 ||\mathbf{x}-\mathbf{x}^*||^2-(1-c)\bar{\mathbf{y}}^TQ_1\bar{\mathbf{y}}\\
&-2(1-c)\bar{\mathbf{y}}^TP_1 \dot{\mathbf{y}}^q+\delta l_1||\mathbf{x}-\mathbf{x}^*||||\bar{\mathbf{y}}||,
\end{aligned}
\end{equation}
for some positive constant $l_1$ by noticing that the $\frac{\partial f_i}{\partial x_i}(\mathbf{x})$ for $i\in \{1,2,\cdots,N\}$ are Lipschitz.
Let $\lambda_{min}(Q_1)$ denote the minimal eigenvalue of $Q_1$. Then,
\begin{equation}
\begin{aligned}
\dot{V}\leq &-\delta c c_3 ||\mathbf{x}-\mathbf{x}^*||^2-(1-c)\lambda_{min}(Q_1)||\bar{\mathbf{y}}||^2\\
&+\delta l_1 ||\mathbf{x}-\mathbf{x}^*||||\bar{\mathbf{y}}||-2(1-c)\bar{\mathbf{y}}^TP_1 \left(\frac{\partial (\mathbf{y}^q)^T}{\partial \mathbf{x}}\right)^T\frac{d \mathbf{x}}{dt}\\
\leq & -\delta c c_3 ||\mathbf{x}-\mathbf{x}^*||^2-(1-c)\lambda_{min}(Q_1)||\bar{\mathbf{y}}||^2\\
&+\delta l_2 ||\mathbf{x}-\mathbf{x}^*||||\bar{\mathbf{y}}||+\delta l_3 ||\bar{\mathbf{y}}||^2,
\end{aligned}
\end{equation}
for some positive constants $l_2$ and $l_3$ by noticing that $\left|\left|\frac{dx_i}{dt}\right|\right|=\left|\left|\delta \bar{k}_i \frac{\partial f_i}{\partial x_i}(\mathbf{y}_i)\right|\right|
=\left|\left|\delta \bar{k}_i \frac{\partial f_i}{\partial x_i}(\mathbf{y}_i)-\delta \bar{k}_i \frac{\partial f_i}{\partial x_i}(\mathbf{x}^*)\right|\right|
\leq  \delta \bar{l}_{i1} ||\bar{\mathbf{y}}||+ \delta \bar{l}_{i2} ||\mathbf{x}-\mathbf{x}^*||,$
%\begin{equation}
%\begin{aligned}
%\left|\left|\frac{dx_i}{dt}\right|\right|=&\left|\left|\delta \bar{k}_i \frac{\partial f_i}{\partial x_i}(\mathbf{y}_i)\right|\right|\\
%=&\left|\left|\delta \bar{k}_i \frac{\partial f_i}{\partial x_i}(\mathbf{y}_i)-\delta \bar{k}_i \frac{\partial f_i}{\partial x_i}(\mathbf{x}^*)\right|\right|\\
%\leq & \delta \bar{l}_{i1} ||\bar{\mathbf{y}}||+ \delta \bar{l}_{i2} ||\mathbf{x}-\mathbf{x}^*||,
%\end{aligned}
%\end{equation}
for some positive constants $\bar{l}_{i1}$ and $\bar{l}_{i2},$ for $i\in \mathcal{N}.$

Define $B_1=\left[
      \begin{array}{cc}
        \frac{(1-c)\lambda_{min}(Q_1)}{\delta}-l_3 & -\frac{l_2}{2} \\
        -\frac{l_2}{2} & cc_3 \\
      \end{array}
    \right].$
%\begin{equation}
%B_1=\left[
%      \begin{array}{cc}
%        \frac{(1-c)\lambda_{min}(Q_1)}{\delta}-l_3 & -\frac{l_2}{2} \\
%        -\frac{l_2}{2} & cc_3 \\
%      \end{array}
%    \right].
%\end{equation}
 Then, if $ 0<\delta<\frac{4(1-c)cc_3\lambda_{min}(Q_1)}{l_2^2+4cc_3l_3},$
% \begin{equation}
% 0<\delta<\frac{4(1-c)cc_3\lambda_{min}(Q_1)}{l_2^2+4cc_3l_3},
% \end{equation}
 matrix $B_1$ is symmetric positive definite.
 If this is the case, $ \dot{V}\leq -\delta \lambda_{min}(B_1)||\mathbf{z}||^2,$
% \begin{equation}
% \dot{V}\leq -\delta \lambda_{min}(B_1)||\mathbf{z}||^2,
% \end{equation}
 where $\lambda_{min}(B_1)$ is the minimal eigenvalue of $B_1$ and $\lambda_{min}(B_1)>0.$

 Noticing that there exist positive constants $\bar{c}_1$ and $\bar{c}_2$ such that $ \bar{c}_1||\mathbf{z}||^2\leq V \leq  \bar{c}_2||\mathbf{z}||^2,$
% \begin{equation}
% \bar{c}_1||\mathbf{z}||^2\leq V \leq  \bar{c}_2||\mathbf{z}||^2,
% \end{equation}
 it can be derived that
 \begin{equation}
 ||\mathbf{z}(t)||\leq \sqrt{\frac{\bar{c}_2}{\bar{c}_1}}e^{-\frac{\delta \lambda_{min}(B_1)}{2\bar{c}_2}t}||\mathbf{z}(0)||,
 \end{equation}
 by utilizing the Comparison Lemma \cite{KHAIL02}.

 Furthermore, define $\mathbf{E}_r(t)=[(\mathbf{x}-\mathbf{x}^*)^T,(\mathbf{y}-\mathbf{y}^q(\mathbf{x}^*))^T]^T.$ Then, $||\mathbf{E}_r(t)||\leq  ||\mathbf{z}(t)||+||\mathbf{y}^q(\mathbf{x})-\mathbf{y}^q(\mathbf{x}^*)||
\leq K_1 ||\mathbf{z}(t)||\leq K_1\sqrt{\frac{\bar{c}_2}{\bar{c}_1}}e^{-\frac{\delta \lambda_{min}(B_1)}{2\bar{c}_2}t}||\mathbf{z}(0)||
\leq  K_1\sqrt{\frac{\bar{c}_2}{\bar{c}_1}}e^{-\frac{\delta \lambda_{min}(B_1)}{2\bar{c}_2}t}(||\mathbf{E}_r(0)||+||\mathbf{y}^q(\mathbf{x}^*)-\mathbf{y}^q(\mathbf{x}(0))||)
\leq K_2\sqrt{\frac{\bar{c}_2}{\bar{c}_1}}e^{-\frac{\delta \lambda_{min}(B_1)}{2\bar{c}_2}t}||\mathbf{E}_r(0)||,$
% \begin{equation}
% \begin{aligned}
%&||\mathbf{E}_r(t)||\leq  ||\mathbf{z}(t)||+||\mathbf{y}^q(\mathbf{x})-\mathbf{y}^q(\mathbf{x}^*)||\\
%\leq &K_1 ||\mathbf{z}(t)||\leq K_1\sqrt{\frac{\bar{c}_2}{\bar{c}_1}}e^{-\frac{\delta \lambda_{min}(B_1)}{2\bar{c}_2}t}||\mathbf{z}(0)||\\
%\leq & K_1\sqrt{\frac{\bar{c}_2}{\bar{c}_1}}e^{-\frac{\delta \lambda_{min}(B_1)}{2\bar{c}_2}t}(||\mathbf{E}_r(0)||+||\mathbf{y}^q(\mathbf{x}^*)-\mathbf{y}^q(\mathbf{x}(0))||)\\
%\leq &K_2\sqrt{\frac{\bar{c}_2}{\bar{c}_1}}e^{-\frac{\delta \lambda_{min}(B_1)}{2\bar{c}_2}t}||\mathbf{E}_r(0)||,
% \end{aligned}
% \end{equation}
 for some positive constants $K_1$ and $K_2$.
 Hence, the conclusion is derived.

\subsection{Proof of Theorem \ref{corol_2}}\label{corol_2_p}
Define the Lyapunov candidate function as
\begin{equation}
V=\frac{c}{2}(\mathbf{x}-\mathbf{x}^*)^T\bar{\mathbf{k}}^{-1}(\mathbf{x}-\mathbf{x}^*)+(1-c)\bar{\mathbf{y}}^TP_1\bar{\mathbf{y}},
\end{equation}
where $c\in (0,1)$ is a constant, $\bar{\mathbf{k}}=\text{diag}\{\bar{k}_i\},i\in \mathcal{N},$ $P_1$ is defined in \eqref{def_p_1} and $\bar{\mathbf{y}}$ is defined in \eqref{def_err}.
Then, there exist positive constants $\bar{c}_1$ and $\bar{c}_2$ such that $\bar{c}_1||\mathbf{z}||^2\leq V\leq \bar{c}_2||\mathbf{z}||^2,$
%\begin{equation}
%\bar{c}_1||\mathbf{z}||^2\leq V\leq \bar{c}_2||\mathbf{z}||^2,
%\end{equation}
where $\mathbf{z}=[(\mathbf{x}-\mathbf{x}^*)^T,\bar{\mathbf{y}}^T]^T$.
Furthermore, for any $(\mathbf{x},\mathbf{y})$ that belongs to a compact set, the time derivative of the Lyapunov candidate function satisfies
\begin{equation}\label{comp_set_1}
\begin{aligned}
\dot{V}=& \delta c(\mathbf{x}-\mathbf{x}^*)^T\frac{\partial G}{\partial \mathbf{x}}(\mathbf{y})\\
&+(1-c)\dot{\bar{\mathbf{y}}}^TP_1\bar{\mathbf{y}}+(1-c)\bar{\mathbf{y}}^TP_1\dot{\bar{\mathbf{y}}}\\
=&\delta c(\mathbf{x}-\mathbf{x}^*)^T\frac{\partial G}{\partial \mathbf{x}}(\mathbf{x})+\delta c (\mathbf{x}-\mathbf{x}^*)^T\left(\frac{\partial G}{\partial \mathbf{x}}(\mathbf{y})-\frac{\partial G}{\partial \mathbf{x}}(\mathbf{x})\right)\\
&+(1-c)(-(L\otimes I_{N\times N}+B_0)\bar{\mathbf{y}}-\dot{\mathbf{y}}^q)^TP_1\bar{\mathbf{y}}\\
&+(1-c)\bar{\mathbf{y}}^TP_1(-(L\otimes I_{N\times N}+B_0)\bar{\mathbf{y}}-\dot{\mathbf{y}}^q)\\
\leq &-\delta cm||\mathbf{x}-\mathbf{x}^*||^2+\delta cl_1||\mathbf{x}-\mathbf{x}^*||||\bar{\mathbf{y}}||\\
&-(1-c)\bar{\mathbf{y}}^TQ_1\bar{\mathbf{y}}-2(1-c)\bar{\mathbf{y}}^TP_1\dot{\mathbf{y}}^q,
\end{aligned}
\end{equation}
for some positive constant $l_1$.

Hence, for any $(\mathbf{x},\mathbf{y})$ that belongs to the compact set,
\begin{equation}\label{comp_set}
\begin{aligned}
\dot{V}\leq & -\delta cm||\mathbf{x}-\mathbf{x}^*||^2+\delta cl_1||\mathbf{x}-\mathbf{x}^*||||\bar{\mathbf{y}}||\\
&-(1-c)\bar{\mathbf{y}}^TQ_1\bar{\mathbf{y}}-2(1-c)\bar{\mathbf{y}}^TP_1\left(\frac{\partial (\mathbf{y}^q)^T}{\partial \mathbf{x}}\right)^T\frac{d\mathbf{x}}{dt}\\
\leq &  -\delta cm||\mathbf{x}-\mathbf{x}^*||^2-(1-c)\lambda_{min}(Q_1)||\bar{\mathbf{y}}||^2\\
&+\delta cl_2||\mathbf{x}-\mathbf{x}^*||||\bar{\mathbf{y}}||+\delta l_3 ||\bar{\mathbf{y}}||^2,
\end{aligned}
\end{equation}
for some positive constants $l_2$ and $l_3$, in which $\lambda_{min}(Q_1)$ is the minimal eigenvalue of $Q_1$. The second inequality in \eqref{comp_set} is derived by using the fact that for $(\mathbf{x},\mathbf{y})$ that belongs to the compact set, $\left|\left|\frac{\partial f_i}{\partial x_i}(\mathbf{y}_i)\right|\right|=\left|\left|\frac{\partial f_i}{\partial x_i}(\mathbf{y}_i)-\frac{\partial f_i}{\partial x_i}(\mathbf{x}^*)\right|\right|
\leq \left|\left|\frac{\partial f_i}{\partial x_i}(\mathbf{y}_i)-\frac{\partial f_i}{\partial x_i}(\mathbf{x})\right|\right|+\left|\left|\frac{\partial f_i}{\partial x_i}(\mathbf{x})-\frac{\partial f_i}{\partial x_i}(\mathbf{x}^*)\right|\right|
\leq \bar{l}_{i1}||\bar{\mathbf{y}}||+\bar{l}_{i2}||\mathbf{x}-\mathbf{x}^*||,$
%\begin{equation}
%\begin{aligned}
%&\left|\left|\frac{\partial f_i}{\partial x_i}(\mathbf{y}_i)\right|\right|=\left|\left|\frac{\partial f_i}{\partial x_i}(\mathbf{y}_i)-\frac{\partial f_i}{\partial x_i}(\mathbf{x}^*)\right|\right|\\
%&\leq \left|\left|\frac{\partial f_i}{\partial x_i}(\mathbf{y}_i)-\frac{\partial f_i}{\partial x_i}(\mathbf{x})\right|\right|+\left|\left|\frac{\partial f_i}{\partial x_i}(\mathbf{x})-\frac{\partial f_i}{\partial x_i}(\mathbf{x}^*)\right|\right|\\
%&\leq \bar{l}_{i1}||\bar{\mathbf{y}}||+\bar{l}_{i2}||\mathbf{x}-\mathbf{x}^*||,
%\end{aligned}
%\end{equation}
for some positive constants $ \bar{l}_{i1}$ and  $\bar{l}_{i2},$ for all $i\in \mathcal{N}.$

The rest of the proof follows the proof of Theorem 1 and is omitted here.

\subsection{Proof of Corollary \ref{corol_2_add}}\label{corol_2_add_p}
The proof follows the proof of Theorem \ref{corol_2}  by further noticing that \eqref{comp_set_1} and \eqref{comp_set} hold for any $\mathbf{z}\in R^{N+N^2}$ given that the functions $\frac{\partial f_i(\mathbf{x})}{\partial x_i},\forall i\in \mathcal{N},$ are globally Lipschitz.

\subsection{Proof of Corollary \ref{qua_a}}\label{qua_a_p}
If Assumption \ref{Assum_5} is satisfied, all the conditions in  Theorem \ref{res_1} are satisfied for the quadratic games. In addition, the Nash equilibrium is unique for the quadratic game. By the result in Theorem \ref{res_1}, there exists a $\delta^*>0$ such that for each $\delta\in (0,\delta^*)$, the equilibrium is exponentially stable. Moreover, the system in  \eqref{cons_xx}-\eqref{NE_ave_cons_1} is a linear system, by which it can be derived that the equilibrium is globally exponentially stable.

\subsection{Proof of Corollary \ref{qua_ca}}\label{qua_ca_p}

Note that for potential games $\frac{\partial f_i(\mathbf{x})}{\partial x_i}=\frac{\partial F(\mathbf{x})}{\partial x_i},\forall i\in \mathcal{N},$
%\begin{equation}
%\frac{\partial f_i(\mathbf{x})}{\partial x_i}=\frac{\partial F(\mathbf{x})}{\partial x_i},\forall i\in \mathcal{N},
%\end{equation}
which indicates that $\frac{\partial^2 f_i(\mathbf{x})}{\partial x_i \partial x_j}=\frac{\partial^2 F(\mathbf{x})}{\partial x_i \partial x_j},\forall i,j\in \mathcal{N}.$
%\begin{equation}
%\frac{\partial^2 f_i(\mathbf{x})}{\partial x_i \partial x_j}=\frac{\partial^2 F(\mathbf{x})}{\partial x_i \partial x_j},\forall i,j\in \mathcal{N}.
%\end{equation}
Hence, $\frac{\partial^2 f_i(\mathbf{x})}{\partial x_i \partial x_j}=\frac{\partial^2 f_j(\mathbf{x})}{\partial x_i \partial x_j}, \forall i,j\in \mathcal{N}.$
%\begin{equation}
%\frac{\partial^2 f_i(\mathbf{x})}{\partial x_i \partial x_j}=\frac{\partial^2 f_j(\mathbf{x})}{\partial x_i \partial x_j}, \forall i,j\in \mathcal{N}.
%\end{equation}
Therefore, $H$ is a symmetric negative definite matrix.

Before we facilitate the subsequent analysis, we firstly show that $\bar{\mathbf{k}}H$ is Hurwitz.
Define an auxiliary system as
\begin{equation}\label{auxi_2}
\dot{\Phi}=\bar{\mathbf{k}}H\Phi,
\end{equation}
where $\bar{\mathbf{k}}=\text{diag}\{\bar{k}_i\},i\in \mathcal{N}.$
Let $\Phi=\sqrt{\bar{\mathbf{k}}}\Psi,$ where $\sqrt{\bar{\mathbf{k}}}$ is a diagonal matrix whose $i$th diagonal element is $\sqrt{\bar{k}_i}.$ Then,
\begin{equation}\label{auxi_1}
\dot{\Psi}=\sqrt{\bar{\mathbf{k}}}H\sqrt{\bar{\mathbf{k}}}\Psi.
\end{equation}
Since $\Psi^T\sqrt{\bar{\mathbf{k}}}H\sqrt{\bar{\mathbf{k}}}\Psi=\Phi^TH\Phi<0$ for every $||\Phi||\neq 0,$ hence for every   $||\Psi||\neq 0,$ it can be derived that,
$\sqrt{\bar{\mathbf{k}}}H\sqrt{\bar{\mathbf{k}}}$ is Hurwitz, which indicates that the equilibrium point of \eqref{auxi_1} is exponentially stable. Hence, the equilibrium point of \eqref{auxi_2} is exponentially stable, by which it can be derived that $\bar{\mathbf{k}}H$ is Hurwitz.

Noticing that $\bar{\mathbf{k}}H$ is Hurwitz, the Nash equilibrium is exponentially stable under the gradient play in \eqref{proof_aux_1}. The rest of the proof follows the proof of Theorem \ref{res_1} by further noticing that the proof of Theorem  \ref{res_1} holds for $\mathbf{z}\in R^{N+N^2}$ for the quadratic potential games.


\begin{thebibliography}{99}
\bibitem {Frihauf12}P. Frihauf, M. Krstic and T. Basar, ``Nash equilibrium seeking in non-cooperative games,"  \emph{IEEE Trans. Autom. Control,} vol. 57, no. 5, pp. 1192-1207, May 2012.
\bibitem{PozoTPS11}D. Pozo and J. Contreras, ``Finding multiple Nash Equilibria in pool-based markets: A stochastic EPEC approach,"  \emph{IEEE Trans. Power Syst.,} vol. 26, no. 3, pp. 1744-1752, Aug. 2011.
  \bibitem{RaltiffAAC13} L. Ratliff, S. Burden and S. Sastry, ``On the characterization of local Nash Equilibria in continuous games,"  \emph{IEEE Trans. Autom. Control,} vol. 61, no. 8, pp. 2301-2307, Aug. 2016.
  \bibitem {ArslanCDC01} J. Shamma and G. Arslan, ``Dynamic fictitious play, dynamic gradient play, and distributed convergence to Nash equilibria," \emph{IEEE Trans. Autom. Control,} vol. 50, no. 3, pp. 312-327, Mar. 2005.
%\bibitem{RaltiffAAC13} L. Raltiff, S. Burden and S. Sastry, ``Characterization and computation of local Nash equilibria in continuous games," in \emph{Proc. 51st Annu. Allerton Conf.,} pp. 917-924, Oct. 2013.
% \bibitem{RatliffACC14} L. Ratliff, S. Burden, and S. Sastry, ``Genericity and structural stability of non-degenerate differential Nash equilibria," in \emph{Proc. Amer. Control Conf.,} pp. 3990-3995, Jun. 2014.
\bibitem{VamvoudakisAT12} K. Vamvoudakis, F. Lewis and G. Hudas, ``Multi-agent differential graphical games: online adaptive learning solution for synchronization with optimality," \emph{Automatica,} vol. 48, no. 8, pp. 1598-1611, Aug. 2012.
\bibitem{PovedaACC15}J. Poveda, A. Teel and D. Nesic, ``Flexible Nash seeking using stochastic difference inclusion,"" in \emph{Proc. Amer. Control Conf.,} pp. 2236-2241, Jul. 2015.
\bibitem{LiTAC14} N. Li and J. Marden, ··Designing games for distributed optimization," \emph{IEEE J. Sel. Topics Signal Process.}, vol. 7, no. 2, pp. 230-242, 2013.
\bibitem{MardenSMC09}J.  Marden, G. Arslan and J. Shamma, ``Cooperative control and potential games," \emph{IEEE Trans. Syst., Man, Cybern. B, Cybern.,} vol. 39, no. 6, pp. 1393-1407, 2009.
\bibitem{TanakaCDC13}T. Tanaka, F. Farokhi and C. Langbort, ``A faithful distributed implementation of dula decomposition and average consensus algorithms," in \emph{Proc. IEEE Conf. Decis. Control,} pp. 2985-2990, 2013.
%\sout{\bibitem{LiTAC141}	N. Li and J. Marden, ``Decoupling coupled constraints through utility design,"  \emph{IEEE Trans. Autom. Control,} vol. 59, no. 8, pp. 2289-2294, Aug. 2014.}

\bibitem{GhaersifardAT13}B. Gharesifard and J. Cortes, ``Distributed convergence to Nash equilibria in two-network zero-sum games," \emph{Automatica}, vol. 49, no. 6, pp. 1683-1692, Jun. 2013.
\bibitem{YOUTAC15} Y. Lou, Y. Hong, L. Xie, G. Shi and K. Johansson, ``Nash equilibrium computation in subnetwork zero-sum games with switching communications," \emph{IEEE Trans. Autom. Control,} vol. 61, 2016.
\bibitem{zhucdc}M. Zhu and E. Frazzoli. ``On distributed equilibrium seeking for generalized convex games," in \emph{Proc. IEEE Conf. Decis. Control,}, pp. 4858-4863, Dec. 2012.
\bibitem{zhu1}M. Zhu and E. Frazzoli, ``Distributed robust adaptive equilibrium computation for generalized convex games," http://arxiv.org/abs/1507.01530.
\bibitem{Schiro13}D. Schiro, J. Pang, and U. Shanbhag, "On the solution of affine generalized Nash equilibrium problems with shared constraints by Lemke's method," \emph{Math. Program.,} vol. 142, no. 1-2, pp. 1-46, May 2012.
\bibitem{StankoviTAC12} M. Stankovic, K. Johansson, and D. Stipanovic, ``Distributed seeking of Nash Equilibria with applications to mobile sensor networks," \emph{IEEE Trans. Autom. Control,} vol. 57, no. 4, pp. 904-919, Apr. 2012.
\bibitem{Liusiam11}S. Liu and M. Krstic, ``Stochastic Nash equilibrium seeking for games with general nonlinear payoffs," \emph{SIAM J. Control Optim.}, vol. 49, no. 4, pp. 1659-1679, Jan. 2011.
\bibitem{Povedaat15}	J. Poveda and N. Quijano, ``Shahshahani gradient-like extremum seeking," \emph{Automatica}, vol. 58, pp. 51-59, Aug. 2015.
\bibitem{DurrAT13}H. Durr, M. Stankovic, C. Ebenbauer and K. Johansson, ``Lie bracket approximation of extremum seeking systems," \emph{Automatica,} vol. 49, no. 6, pp. 1538-1552, Jun. 2013.
    \bibitem{MohsenianTSG10}A. Mohsenian-Rad, V. Wong, J. Jatskevich, R. Schober and A. Leon-Garcia, ``Autonomous demand-side management based on game-theoretic energy consumption scheduling for the future smart grid," \emph{IEEE Trans. Smart Grid,}  vol. 1, no. 3, pp. 320-331, Dec. 2010.
\bibitem{NekoueiTSG15}	E. Nekouei, T. Alpcan, and D. Chattopadhyay, ``Game-Theoretic Frameworks for demand response in electricity markets," \emph{IEEE Trans. Smart Grid,} vol. 6, no. 2, pp. 748-758, Mar. 2015.

\bibitem{RosenEco65}J. Rosen, ``Existence and uniquness of equilibrium points for concave $N$-person games," \emph{Econometrica,} vol. 33, no. 3, p. 520, Jul. 1965.
\bibitem{SalehisadaghianiCDC14}F. Salehisadaghiani and L. Pavel, ``Nash equilibrium seeking by a gossip-based algorithm," in \emph{Proc. IEEE Conf. Decis. Control,} pp. 1155-1160, Dec. 2014.
%\bibitem{Ren08} W. Ren and R. Beard, \emph{Distributed Consensus in Multi-vehicle Cooperative Control,} Springer, 2008.
%\bibitem{Lewis14} F. Lewis, H. Zhang, K. Hengster-Movric and A. Das, \emph{Cooperative Control of Multi-Agent Systems: Optimal and Adaptive Design Approaches,} Springer-Verlag, 2014.
%\bibitem{LiTAC11}	T. Li, M. Fu, L. Xie, and J. Zhang, ``Distributed consensus with limited communication data rate," \emph{IEEE Trans. Autom. Control,} vol. 56, no. 2, pp. 279-292, Feb. 2011.
\bibitem{CSM07}W. Ren, R. Beard and E. Atkins, ``Information consensus in multivehicle cooperative control," \emph{IEEE Control Syst. Mag.,} vol. 27, no. 2, pp. 71-82, Apr. 2007.
\bibitem{OlfatiAC04} R. Olfati-Saber and R. Murray, ``Consensus problems in networks of agents with switching Topology and time-delays," \emph{IEEE Trans. Autom. Control,} vol. 49, no. 9, pp. 1520–1533, Sep. 2004.
\bibitem{HongAt06} Y. Hong, J. Hu and L. Gao, ``Tracking control for multi-agent consensus with an active leader and variable topology," \emph{Automatica,} vol. 42, no. 7, pp. 1177-1182, Jul. 2006.
\bibitem{HuSCL12}G. Hu, ``Robust consensus tracking of a class of second-order multi-agent dynamic systems," \emph{Systems and Control Letters,} vol. 61, no. 1, pp. 134-142, Jan. 2012.
\bibitem{DongAT13}Y. Dong and J. Huang, ``A leader-following rendezvous problem of double integrator multi-agent systems," \emph{Automatica,} vol. 49, no. 5, pp. 1386-1391, May 2013.




\bibitem{WenSCL13}G. Wen, G. Hu, W. Yu, J. Cao and G. Chen, ``Consensus tracking for higher-order multi-agent systems with switching directed topologies and occasionally missing control inputs," \emph{Systems and Control Letters,} vol. 62, no. 12, pp. 1151-1158, Dec. 2013.
\bibitem {Freeman06}R. Freeman, P. Yang and K. Lynch, ``Stability and convergence properties of dynamic average consensus estimators," in \emph{Proc. IEEE Conf. Decis. Control,} pp. 398-403, Dec. 2006.
\bibitem {Menon14}A. Menon and J. Baras, ``Collaborative extremum seeking for welfare optimization," in \emph{Proc. IEEE Conf. Decis. Control}, pp. 346-351, Dec. 2014.
%\bibitem {ArslanCDC01} G. Arslan and J. Shamma, ``Distributed convergence to Nash Equilibria with local utility measurements," in \emph{Proc. IEEE Conf. Decis. Control,,} pp. 1538-1543, Dec. 2004.

\bibitem{Swenson12}B. Swenson, S. Kar and J. Xavier, ``Distributed learning in large-scale multi-agent games: a modified fictitious play approach," in \emph{Conf. Rec. 46th Asilomar Conf. Signals, Systems and Comput.,} pp. 1490-1495, Nov. 2012.

\bibitem{Swenson15} B. Swenson, S. Kar and J. Xavier, ``Empirical centroid fictitious play: an approach for distributed learning in multi-agent games," \emph{IEEE Trans. Signal Process. ,} vol. 63, no. 15, pp. 3888-3901, Aug. 2015.

\bibitem{KoshalCDC12}J. Koshal, A. Nedic and U. Shanbhag, ``A gossip algorithm for aggregative games on graphs," in \emph{Proc. IEEE Conf. Decis. Control,} pp. 4840-4845, Dec. 2012.
\bibitem {YE15}M. Ye and G. Hu, ``Game design and analysis for price-based demand response: an aggregate game approach," \emph{IEEE Trans. Cybern.,} vol. 47, no. 3, pp. 720-730, 2017.

%\bibitem{YECDC}M. Ye and G. Hu, ``A distributed extremum seeking scheme for networked optimization," \emph{IEEE Conf. on Decision and Control}, pp. 4928-4933, 2015.
\bibitem{YECST}M. Ye and G. Hu, ``Distributed extremum seeking for constrained networked optimization and its application to energy consumption control in smart grid," \emph{IEEE Trans. Control Syst. Technol.,} vol. 24, no. 6, pp. 2048-2058, 2016.
\bibitem {YETC15}D. Monderer and L. Shapley, ``Potential games," \emph{Games Econ. Behav.}, vol. 14, no. 1, pp. 124–143, May 1996.
\bibitem {NASH51}J. Nash, ``Non-cooperative games," \emph{Ann. Math.,} vol. 54, no. 2, p. 286, Sep. 1951.
\bibitem{KHAIL02}H. Khailil, \emph{Nonlinear System,} Prentice Hall, 3rd edtion, 2002.
\bibitem{Horn85}R. Horn and C. Johnson, \emph{Matrix Analysis,} Cambridge, U.K.:Cambridge Univ. Press, 1985.
\bibitem{Bertsekas} D. Bertsekas, A. Nedic and A. Ozdaglar, \emph{Convex Analysis and Optimization,} Athena Scientific, 2003.
 \bibitem{Basar99}T. Basar and G. Olsder, \emph{Dynamic Noncooperative Game Theory}, 2nd ed. Philadelphia, PA: SIAM, 1999.
 \bibitem{Andreasson14}M. Andreasson, D. Dimarogonas, H. Sandberg and K. Johansson, ``Distributed control of networked dynamical systems: state feedback, integral action and consensus," \emph{IEEE Trans. Autom. Control,} vol. 59, no. 7, pp. 1750-1764, 2014.
 \bibitem{Hug2015}G. Hug, S. Kar and C. Wu, ``Consensus+ innovations approach for distributed multiagent coordination in a microgrid," \emph{IEEE Trans. Smart Grid,} vol. 6, no. 4, pp. 1893-1903, 2015.
\bibitem{Tekin12} C. Tekin, M. Liu, R. Southwell, J. Huang and S. Ahmad, ``Atomic congestion games on graphs and their applications in networking," \emph{IEEE/ACM Trans. Netw,} vol. 20, no. 5, pp. 1541 - 1552, 2012.
\end{thebibliography}
\end{document}